\newtheorem{thm}{Theorem}[section]
\newtheorem{lemma}[thm]{Lemma}
\newtheorem{prop}[thm]{Proposition}
\newtheorem{coroll}[thm]{Corollary}
\theoremstyle{definition}
\newtheorem{defin}[thm]{Definition}
\newtheorem{rem}[thm]{Remark}
\newtheorem{exam}[thm]{Example}
\newtheorem{question}[thm]{Question}
\newtheorem*{acknow}{Acknowledgements}
\newcommand{\R}{{\mathbb{R}}}
\newcommand{\Z}{{\mathbb{Z}}}
\newcommand{\N}{{\mathbb{N}}}
\newcommand{\C}{{\mathbb{C}}}
\newcommand{\cA}{{\mathcal{A}}}
\newcommand{\cB}{{\mathcal{B}}}
\newcommand{\cC}{{\mathcal{C}}}
\newcommand{\cL}{{\mathcal{L}}}
\newcommand{\cO}{{\mathcal{O}}}
\newcommand{\cP}{{\mathcal{P}}}
\newcommand{\cQ}{{\mathcal{Q}}}
\newcommand{\cT}{{\mathcal{T}}}
\newcommand{\fc}{{:\ }}
\newcommand{\ol}{\overline}
\newcommand{\wt}{\widetilde}
\newcommand{\wh}{\widehat}
\newcommand{\tb}{\textbf}
\DeclareMathOperator{\id}{id}
\DeclareMathOperator{\supp}{supp}
\DeclareMathOperator{\Ham}{Ham}
\DeclareMathOperator{\Lip}{Lip}
\DeclareMathOperator{\diam}{diam}
\DeclareMathOperator{\Spec}{Spec}
\title{Approximation of quasi-states on manifolds}
\author{Adi Dickstein\footnote{School of Mathematical Sciences, Tel Aviv University.}\ \ and Frol Zapolsky\footnote{Department of Mathematics, University of Haifa.}}
\date{}
\begin{document}

\renewcommand{\labelenumi}{(\roman{enumi})}

\maketitle

\begin{abstract}
Quasi-states are certain not necessarily linear functionals on the space of continuous functions on a compact Hausdorff space. They were discovered as a part of an attempt to understand the axioms of quantum mechanics due to von Neumann. A very interesting and fundamental example is given by the so-called median quasi-state on $S^2$. In this paper we present an algorithm which numerically computes it to any specified accuracy. The error estimate of the algorithm crucially relies on metric continuity properties of a map, which constructs quasi-states from probability measures, with respect to appropriate Wasserstein metrics. We close with non-approximation results, particularly for symplectic quasi-states.
\end{abstract}

\section{Introduction and results}\label{s:Intro_results}

Quasi-states on a topological space are a certain generalization of integration against a Borel probability measure. These objects arose from an attempt to understand von Neumann's axioms of quantum mechanics, according to which a quantum system is found in a \emph{state}, which is a positive \emph{linear} functional on the algebra of observables, which are bounded linear operators on a complex Hilbert space. However, physicists objected to this linearity, because it is meaningless from the point of view of physics, unless the two observables in question commute. Thus the notion of a quantum quasi-state appeared, wherein the linearity assumption was replaced by the less restrictive assumption of linearity on commuting observables. The natural question whether nonlinear quantum quasi-states exist was finally settled by Gleason \cite{Gleason_Meas_closed_subsp_Hilb_sp}, who showed that quantum quasi-states are linear provided the underlyng Hilbert space has dimension at least three. See also \cite{Entov_Polterovich_Zapolsky_An_anti_Gleason_phenomenon_simul_measurements_classical_mech} and references therein.

The quantum-classical correspondence principle says that a classical mechanical system is in a certain sense the limit of quantum systems when the Planck constant $\hbar$, considered as a parameter, tends to $0$. Since there are no non-linear quantum quasi-states, a natural question therefore is whether \emph{classical} quasi-states exist. This was answered in the positive by Aarnes in \cite{Aarnes_Qss_qms}. Later, Entov and Polterovich discovered an even more stringent subtype of \emph{symplectic} quasi-states \cite{Entov_Polterovich_Quasi_states_symplectic_intersections}. Remarkably, when the underlying symplectic manifold is the $2$-sphere, what they obtained is the so-called \emph{median quasi-state}, which also appears indirectly in Aarnes's topological theory. It is therefore among the most fundamental examples of quasi-states, and also among the simplest. It is described in Example \ref{exam:median_Aarnes_qss}.

Due to the fundamental nature and interest of the median quasi-state, it is desirable to be able to numerically compute it. Our main contribution in this paper is an algorithm with does that to any specified accuracy. We estimate the error of the algorithm using metric continuity of a natural construction of quasi-states from measures. Other results here are that, \emph{inter alia}, certain quasi-states on symplectic manifolds of dimension $\geq 4$, coming from Floer theory \cite{Entov_Polterovich_Symp_QSs_semisimplicity_QH}, cannot be approximated by this construction. Curiously, Wasserstein metrics make an appearance here.

Quasi-states had long remained a largely theoretical topic, but this is starting to change. For instance, in \cite{Rustad_The_median_multidim_spaces} this concept is applied to the definition of a multidimensional median. See also the references therein. Our paper also brings this topic closer to the applied side.

\subsection{Basic examples and the approximation algorithm}\label{ss:basic_exs_approx_algorithm}

Let $X$ be a compact Hausdorff space, and let $C(X)$ be the Banach algebra of real-valued continuous functions on $X$, endowed with the $C^0$-norm $\|f\|_{C^0} = \max_{x \in X}|f(x)|$. For $f \in C(X)$ the subalgebra it generates is defined as $C(f) = \{\phi\circ f\,|\, \phi \fc \R \to \R \text{ continuous}\}$.
\begin{defin}
A quasi-state on $X$ is a functional $\zeta \fc C(X) \to \R$ such that
\begin{enumerate}
  \item $\zeta(1) = 1$;
  \item $\zeta(f) \leq \zeta(g)$ for $f,g \in C(X)$ with $f \leq g$;
  \item for each $f\in C(X)$, $\zeta$ is linear on $C(f)$.
\end{enumerate}
We moreover call $\zeta$ simple if $\zeta(f^2) = \zeta(f)^2$ for all $f \in C(X)$.
\end{defin}

\begin{rem}\label{rem:elementary_pties_qss}
\begin{enumerate}
  \item Note that if $\mu$ is a Borel probability measure on $X$, the functional $\int_X\cdot\,d\mu \fc C(X) \to \R$ defines a linear quasi-state, which is simple if and only if $\mu$ is a delta-measure.
  \item It easily follows from the definition that a quasi-state $\zeta$ on $X$ is $1$-Lipschitz with respect to the $C^0$-norm, that is $|\zeta(f) - \zeta(g)| \leq \|f-g\|_{C^0}$ for $f,g \in C(X)$.
\end{enumerate}
\end{rem}

The existence of nonlinear quasi-states is due to Aarnes \cite{Aarnes_Qss_qms}. He developed a general construction of quasi-states, which we will describe in detail in Section \ref{ss:Construction_qss_tms} below.

Let us start with two interesting examples of simple quasi-states on the sphere $S^2 \subset \R^3$. A quasi-state is Lipschitz with respect to the $C^0$-norm by Remark \ref{rem:elementary_pties_qss}, therefore uniquely defined by its values on the $C^0$-dense subset of Morse functions. Let us fix a Morse function $f \fc S^2 \to \R$.
\begin{exam}\label{exam:median_Aarnes_qss}
\begin{enumerate}
  \item \tb{The median quasi-state $\zeta$:} Let $\nu$ be the normalized Lebesgue measure on $S^2$ coming from the standard round area form. There is a unique component $m_\nu(f)$ of a level set of $f$, called the median of $f$, with the property that $\nu(C) \leq \frac 1 2$ for every connected component $C$ of $S^2 - m_\nu(f)$. The value of the median quasi-state $\zeta$ at $f$ is $\zeta(f) = f(m_\nu(f))$.
  \item \tb{The Aarnes quasi-state $\zeta_Z$} corresponding to a finite subset $Z \subset S^2$ of odd cardinality $|Z| \geq 3$: let $\mu_Z = \frac{1}{|Z|}\sum_{z\in Z}\delta_z$ be the uniformly distributed discrete probability measure supported on $Z$. Then there is a unique component $m_Z(f)$ of a level set of $f$, called the $Z$-median of $f$, such that every connected component $C$ of $S^2 - m_Z(f)$ has $\mu_Z(C) \leq \frac 1 2$. The value of the Aarnes quasi-state $\zeta_Z$ at $f$ is $\zeta_Z(f) = f(m_Z(f))$.
\end{enumerate}
\end{exam}
\noindent As we mentioned above, the median quasi-state arises independently from quasi-morphisms on the group of area- and orientation-preserving diffeomorphisms of $(S^2,\nu)$, which are construction using Floer theory \cite{Entov_Polterovich_Calabi_quasimorphism_quantum_homology}. It can be characterized as the unique quasi-state on $S^2$ which is invariant under the group of area-preserving diffeomorphisms and which vanishes on functions supported in disks of area at most $\frac 1 2$. As such, it is of fundamental interest, and thus it is desirable to be able to numerically compute it on a wide class of functions.

Let us endow $S^2 \subset \R^3$ with the metric induced from the Euclidean metric on $\R^3$, and for $f \in S^2$ let $\|f\|_{\Lip}$ be its Lipschitz constant with respect to this metric. We can now formulate our main result.
\begin{thm}\label{thm:main_result}
There is an algorithm which accepts as input a Lipschitz function $f \in S^2$ and an integer parameter $N \geq 46$, and produces as output a number which differs from $\zeta(f)$ by no more than
\begin{equation}\label{eq:estimate_main_algorithm}
\frac{C}{N}\cdot\|f\|_{\Lip}\,,\quad \text{where }C\approx 197.778\dots
\end{equation}
The algorithm has complexity $O(N^2\log N)$.
\end{thm}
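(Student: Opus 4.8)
The plan is to build the algorithm by approximating the median quasi-state $\zeta$ (which is defined via the Lebesgue measure $\nu$) by an Aarnes quasi-state $\zeta_Z$ for a suitably chosen finite set $Z$ of odd cardinality, then to compute $\zeta_Z(f)$ exactly from combinatorial data extracted from a discretization of $f$. The first ingredient is a \emph{metric continuity} statement: the map $\mu \mapsto \zeta_\mu$ sending a probability measure to the corresponding "median-type" quasi-state should be Lipschitz from the space of measures equipped with an appropriate Wasserstein metric to $C(X)^*$ with the dual $C^0$-norm, at least when restricted to Lipschitz test functions, where one picks up a factor of $\|f\|_{\Lip}$. Granting such an estimate, I would choose $Z$ to be a roughly uniform $\tfrac{1}{N}$-net on $S^2$ with weights so that the empirical measure $\mu_Z$ is within $O(1/N)$ of $\nu$ in the relevant Wasserstein distance; this is where the hypothesis $N \geq 46$ and the precise constant $C \approx 197.778$ will come from, after tracking the packing constants for $S^2$, the Lipschitz constant of the quasi-state construction, and the discretization error in evaluating $f$ on a grid.

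The second ingredient is the exact computation of $\zeta_Z(f)$ for a Lipschitz $f$, which by the $C^0$-Lipschitz property of quasi-states (Remark \ref{rem:elementary_pties_qss}) may be replaced by a nearby Morse (or piecewise-linear) function $\wt f$ on a triangulation of $S^2$ with $O(N^2)$ vertices. For such $\wt f$ one builds the Reeb graph: a tree whose edges are indexed by the connected components of the level sets of $\wt f$, tracks the $\mu_Z$-mass on each side, and locates the unique edge/component $m_Z(\wt f)$ such that every complementary component has mass $\leq \tfrac12$. Sorting the $O(N^2)$ vertex values costs $O(N^2 \log N)$, and the sweep building the Reeb tree and propagating masses is linear in the number of simplices, so the total complexity is $O(N^2\log N)$, as claimed. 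Combining the discretization error $\|f - \wt f\|_{C^0} = O(\|f\|_{\Lip}/N)$, the $1$-Lipschitz property of $\zeta_Z$, the Wasserstein estimate $|\zeta(g) - \zeta_Z(g)| = O(\|g\|_{\Lip}/N)$, and $\|\wt f\|_{\Lip} \lesssim \|f\|_{\Lip}$ yields \eqref{eq:estimate_main_algorithm}.

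The main obstacle, and the technical heart of the argument, is the metric continuity of the construction $\mu \mapsto \zeta_\mu$ with respect to Wasserstein metrics. The quasi-state is defined through a global combinatorial recipe on the tree of level-set components — the location of the median can jump discontinuously as one perturbs $\mu$, so one cannot argue pointwise on fixed functions; instead one must show that the \emph{value} $f(m_\mu(f))$ varies continuously, exploiting that near the median the function $f$ separates the sphere into pieces each of mass close to $\tfrac12$, and that a $W_1$- (or $W_\infty$-) small perturbation of $\mu$ can only move the balance point within a level set on which $f$ varies by $O(\|f\|_{\Lip})$ times the perturbation size. Making this quantitative — and extracting the explicit constant — requires a careful analysis of how mass is distributed across the Reeb tree of a Lipschitz function, and this is where the bulk of the work, and the origin of the numerical constant $C$, will lie.
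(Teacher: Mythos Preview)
Your proposal is correct and follows essentially the same route as the paper: replace $f$ by a PL function $F$ on an icosahedral triangulation with $O(N^2)$ simplices, approximate $\zeta$ by $\zeta_Z$ for an odd set $Z$ chosen so that $\mu_Z$ is $W_\infty$-close to the Lebesgue measure, invoke a metric continuity theorem for $\mu\mapsto\zeta_\mu$, and compute $\zeta_Z(F)$ exactly via the Reeb tree in time $O(N^2\log N)$.

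One correction worth flagging: you locate the origin of the numerical constant $C\approx 197.778$ in the metric continuity analysis, but in fact the paper's continuity result (Theorem~\ref{thm:metric_conv_qss}) is a clean $1$-Lipschitz statement $W_1(\zeta_\mu,\zeta_\nu)\leq W_\infty(\mu,\nu)$ with no extraneous constant, and its proof is short (use the alternative formula for $W_\infty$ to show the $\epsilon$-neighborhood of $m_\mu(f)$ has $\tau_\nu$-measure $1$, hence meets $m_\nu(f)$). The large constant $C$ comes entirely from the geometric discretization estimates: the simplex diameter and Lipschitz blow-up for the icosahedral PL interpolant (Proposition~\ref{prop:pties_PL_approx_sphere}, Lemma~\ref{lemma:estimates_icosahedral_triangulation}), the diameter bound $7/\sqrt{k}$ for the equal-area partition of Zhou, and the constraint $k\lesssim N^2$ needed to guarantee each region contains a triangulation vertex.
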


\begin{rem}
\begin{enumerate}
  \item Alternatively, one can first specify the accuracy $\epsilon > 0$, and the theorem says that there is an algorithm of complexity $O(-\epsilon^{-2}\log\epsilon)$ which produces a number differing from $\zeta(f)$ by at most $\epsilon$.
  \item The constant appearing in \eqref{eq:estimate_main_algorithm} is probably far away from being sharp. Due to the large value of the constant, the estimate is of limited importance in practice. The main point is to prove the existence of a convergent algorithm computing $\zeta(f)$.
  \item In practice, Lipschitz functions on $S^2$ are given for instance by restrictions of polynomials on $\R^3$. If
    $$f = \sum_{i,j,k}c_{ijk}x^iy^jz^k\Big|_{S^2}$$
    is such a function, where $x,y,z$ are the coordinates on $\R^3$, then
    $$\|f\|_{\Lip} \leq D\sqrt{3}\bigg(\sum_{i,j,k}c_{ijk}^2\bigg)^{1/2}\,,$$
    where $D$ is the degree of the initial polynomial.
\end{enumerate}
\end{rem}

The algorithm replaces $f$ with a function $F$ which is piecewise linear with respect to a suitable triangulation of $S^2$, and then algorithmically computes $\zeta_Z(F)$ where $Z$ is a carefully chosen subset of the set of vertices of the triangulation. The estimate \eqref{eq:estimate_main_algorithm} relies on a metric approximation result for quasi-states on manifolds, Theorem \ref{thm:metric_conv_qss} below. Theorem \ref{thm:main_result} is proved in Section \ref{ss:pf_main_thm}.

\subsection{Quasi-states from measures}\label{ss:Construction_qss_tms}

In this section we describe a map $\Psi$ which assigns a simple quasi-state to a measure on a CW complex with vanishing first cohomology, and formulate its continuity properties, which are crucial for the proof of the estimate \eqref{eq:estimate_main_algorithm} of Theorem \ref{thm:main_result}. The median quasi-state and the Aarnes quasi-states of Example \ref{exam:median_Aarnes_qss} are the value of $\Psi$ on the Lebesgue measure on $S^2$ and on the discrete measure $\mu_Z$, respectively.

The map $\Psi$ is constructed in two steps. The first step is the Aarnes representation theorem, proved in \cite{Aarnes_Qss_qms}, which furnishes a bijection between quasi-states and certain set functions, known as topological measures. The second step constructs topological measures from measures.

\subsubsection{Topological measures}\label{sss:top_measures}

Let $\cC(X)$ and $\cO(X)$ be the collections of closed and open subsets of $X$, respectively, and let $\cA(X) = \cC(X) \cup \cO(X)$.

\begin{defin}
A function $\tau \fc \cA(X) \to [0,1]$ is called a topological measure if
\begin{enumerate}
  \item $\tau(X) = 1$;
  \item $\tau(A) \leq \tau(A')$ for $A,A' \in \cA(X)$ with $A \subset A'$;
  \item $\tau(A) = \sum_{i=1}^{m}\tau(A_i)$ for $A,A_1,\dots,A_m \in \cA(X)$, such that the $A_i$ are all mutually disjoint and $A = \bigcup_{i=1}^m A_i$;
  \item $\tau(O) = \sup\{\tau(K)\,|\,K \in \cC(X),\,K \subset O\}$ for every $O \in \cO(X)$.
\end{enumerate}
We say that $\tau$ is simple if it only takes values $0,1$.
\end{defin}
\begin{rem}
Note that the restriction to $\cA(X)$ of a Borel probability measure on $X$ is a topological measure, which is simple if and only if it is a delta-measure.
\end{rem}

The Aarnes representation theorem proved in \cite{Aarnes_Qss_qms} says that quasi-states and topological measures are in a $1$-to-$1$ correspondence, as follows. If $\zeta$ is a quasi-state on $X$, the corresponding topological measure $\tau \fc \cA(X) \to \R$ is defined for $K \in \cC(X)$ by
$$\tau(K) = \inf\{\zeta(f)\,|\,f \in C(X)\,,f\geq \chi_K\}$$
and for $O \in \cO(X)$ by $\tau(O) = 1 - \tau(X - O)$, where $\chi_K$ is the indicator function of $K$. Conversely, if $\tau$ is a topological measure on $X$, the corresponding quasi-state $\zeta$ is given by
\begin{equation}\label{eq:qss_from_tm_via_integration}
\zeta(f) = \min_X f + \int_{\min_X f}^{\max_X f}\tau(\{f\geq s\})\,ds
\end{equation}
for $f \in C(X)$. The Aarnes representation theorem extends the usual Riesz representation theorem, in the sense that the restriction of a Borel probability measure $\mu$ to $\cA(X)$ corresponds by the above bijection to the integral $\int_X\cdot\,d\mu$. Moreover, simple quasi-states correspond to simple topological measures.

\subsubsection{Aarnes's construction}\label{sss:Aarnes_construction_Psi}

In \cite{Aarnes_Construction_non_subadd_meas_discretization_Borel_meas} Aarnes describes a general method of constructing simple topological measures on spaces satisfying a condition which Knudsen showed in \cite{Knudsen_Topology_construction_extreme_qms} to be equivalent to $H^1(X;\Z) = 0$ if $X$ is a CW complex. Since we are only concerned with smooth manifolds, and any compact manifold is homeomorphic to a finite CW complex, for instance via a triangulation \cite{Cairns_A_simple_triangulation_method_smooth_mfds}, this latter condition is the one we will use.

Aarnes's method uses so-called solid sets. A set $A \subset X$ is solid if it is connected and its complement is also connected. To use Aarnes's method, we need the notion of the spectrum of a Borel probability measure (see \cite{Butler_q_fcns_extreme_tms}, where it is called the split spectrum). Let $\cP(X)$ denote the space of Borel probability measures on $X$. For $\mu \in \cP(X)$ its spectrum is the set $\Spec(\mu)$ of numbers $\alpha \in (0,1)$ such that there are disjoint solid $C,C' \in \cC(X)$ with $\mu(C) = \alpha$, $\mu(C') = 1-\alpha$. We then have \cite{Aarnes_Construction_non_subadd_meas_discretization_Borel_meas}: if $\mu \in \cP(X)$ is such that $\frac 1 2 \notin \Spec(\mu)$, then there is a unique simple topological measure $\tau_\mu$, such that for solid $C \in \cC(X)$ we have
\begin{equation}\label{eq:defin_tm_corresp_to_meas}
\tau_\mu(C) = \left\{\begin{array}{ll}0 \,, & \text{if }\mu(C) < \frac 1 2 \\ 1 \,, & \text{if } \mu(C) \geq \frac 1 2\end{array}\right.
\end{equation}
For future use we note that if $O \in \cO(X)$ is solid, then $\tau_\mu(O) = 0$ if $\mu(O) \leq \frac 1 2$ and $\tau_\mu(O) = 1$ otherwise.

Denoting
$$\cP_0(X) = \{\mu \in \cP(X)\,|\, \tfrac 1 2 \notin \Spec(\mu)\}\,,$$
we therefore have the map announced at the beginning of this section:
$$\Psi \fc \cP_0(X) \to \cQ(X)\,, \qquad \mu \mapsto \zeta_\mu\,,$$
where $\cQ(X)$ is the set of quasi-states on $X$, and $\zeta_\mu$ is the quasi-state corresponding to $\tau_\mu$ by the Aarnes representation theorem.

\subsubsection{Continuity of $\Psi$}\label{sss:continuity_Psi}

Here we state our main result concerning the metric continuity property of $\Psi$, which is used in Section \ref{ss:pf_main_thm} to prove the estimate \eqref{eq:estimate_main_algorithm}. To this end we need certain natural metrics on the spaces of measures and quasi-states in case $X$ is a metric space. Let us therefore assume that $(X,d)$ is a compact metric space. Wasserstein metrics are a natural family of metrics on $\cP(X)$ parametrized by $p \in [1,\infty]$. They appear in optimal transport theory, see \cite{Villani_Optimal_transport}. For $p\in[1,\infty)$ we have the $p$-Wasserstein metric
$$W_p(\mu,\nu) = \inf_{\rho} \left(\int_{X\times X}d(x,y)\,d\rho(x,y)\right)^{1/p}\,,$$
where $\rho$ runs over Borel probability measures on $X \times X$ with marginals $\mu$ and $\nu$, that is $p_*\rho = \mu$ and $q_*\rho = \nu$ where $p,q\fc X \times X \to X$ are the two projections.
The $\infty$-Wasserstein metric is defined by
\begin{equation}\label{eq:W_infty_metric}
W_\infty(\mu,\nu) = \inf_{\rho}\max_{(x,y)\in\supp\rho}d(x,y)\,,
\end{equation}
with $\rho$ as before. The Kantorovich--Rubinstein duality \cite{Kantorovich_Rubinstein_On_a_space_of_completely_additive_functions}, \cite{Fernique_Sur_le_theoreme_Kantorovich_Rubinstein_espaces_Polonais}, \cite{Villani_Optimal_transport} expresses $W_1$ via integration of Lipschitz functions on $X$. Namely, let us denote by $\Lip(X,d)$ the space of Lipschitz functions on $X$ and for $f \in \Lip(X,d)$ its Lipschitz pseudo-norm
$$\|f\|_{\Lip} = \inf\{L \geq 0\,|\,|f(x) - f(y)| \leq Ld(x,y)\text{ for all }x,y\in X\}\,.$$
Then
\begin{equation}\label{eq:W_1_dual_defin_via_Lip_fcns}
W_1(\mu,\nu) = \sup\left\{\left|\int_Xf\,d\mu - \int_X f\,d\nu\right|\,\Big|\,f \in \Lip(X,d)\,,\|f\|_{\Lip} \leq 1\right\}\,.
\end{equation}

It is tempting to try to define analogous metrics on $\cQ(X)$, but the major issue is that constructing quasi-states on $X \times X$ with given marginals is quite a nontrivial task, see \cite{Grubb_products_qms}. However the dual definition of $W_1$, suggested to us by Leonid Polterovich, admits a straightforward generalization to $\cQ(X)$:
$$W_1(\zeta,\zeta') = \sup\left\{|\zeta(f) - \zeta'(f)|\,\big|\,f \in \Lip(X,d)\,,\|f\|_{\Lip} \leq 1\right\}\,,$$
and, as it turns out, this is exactly the metric we need:
\begin{thm}\label{thm:metric_conv_qss}
Let $X$ be a closed connected manifold with $H^1(X;\Z) = 0$ and let $d$ be a metric on $X$ inducing its topology. Then the map
$$\Psi \fc (\cP_0(X),W_\infty) \to (\cQ(X),W_1)$$
is $1$-Lipschitz, that is for all $f \in \Lip(X,d)$ and $\mu,\nu \in \cP_0(X)$ we have
$$|\zeta_\mu(f) - \zeta_\nu(f)| \leq \|f\|_{\Lip}\cdot W_\infty(\mu,\nu)\,.$$
\end{thm}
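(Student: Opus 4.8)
The plan is to reduce everything to a pointwise statement about the values $\zeta_\mu(f)$ in terms of the formula \eqref{eq:qss_from_tm_via_integration}, and then to exploit the characterization \eqref{eq:defin_tm_corresp_to_meas} of $\tau_\mu$ on solid sets together with the structure of sublevel/superlevel sets of a Lipschitz function on a manifold. Fix $f \in \Lip(X,d)$ with $\|f\|_{\Lip} = L < \infty$; by scaling we may assume $L = 1$. Fix $\mu,\nu \in \cP_0(X)$ and let $\delta > W_\infty(\mu,\nu)$ be arbitrary, so there is a coupling $\rho$ of $\mu$ and $\nu$ with $d(x,y) \le \delta$ for all $(x,y) \in \supp\rho$. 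The goal is to show $|\zeta_\mu(f) - \zeta_\nu(f)| \le \delta$, and then let $\delta \downarrow W_\infty(\mu,\nu)$.

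The key comparison is at the level of superlevel sets. Because $f$ is $1$-Lipschitz and $\rho$ moves mass by at most $\delta$, one expects that the $\tau_\mu$- and $\tau_\nu$-measures of superlevel sets of $f$ are related by a shift of $\delta$ in the level: concretely, I would aim to prove that $\tau_\nu(\{f \ge s\}) \le \tau_\mu(\{f \ge s - \delta\})$ for all $s$. Granting this, plugging into \eqref{eq:qss_from_tm_via_integration} and changing variables in the integral gives $\zeta_\nu(f) \le \zeta_\mu(f) + \delta$, and by symmetry the reverse inequality, finishing the proof. To establish the superlevel-set inequality, note first that it suffices to prove it when $\{f \ge s\}$ is replaced by a regular superlevel set whose connected components are ``nice''; since $\tau_\nu$ is a topological measure, it is additive on disjoint unions, so $\tau_\nu(\{f\ge s\})$ is the sum of $\tau_\nu$ over the connected components of $\{f \ge s\}$, and $\tau_\nu$ takes value $0$ or $1$ on each such component. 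The components on which $\tau_\nu = 1$ are the ones carrying ``more than half'' the $\nu$-mass in the appropriate solid-set sense of \eqref{eq:defin_tm_corresp_to_meas}; I would argue that each such component $C$, when enlarged to its $\delta$-neighborhood $C^\delta = \{x : d(x,C) \le \delta\}$, satisfies $\mu(C^\delta) \ge \nu(C)$ via the coupling $\rho$ (mass of $\nu$ inside $C$ comes, under $\rho$, from $\mu$-mass inside $C^\delta$), and moreover $C^\delta \subset \{f \ge s - \delta\}$ by the Lipschitz bound. One then needs to pass from $\mu(C^\delta) \ge \tfrac12$-type information to $\tau_\mu(C^\delta) = 1$ or, more robustly, to $\tau_\mu(\{f \ge s-\delta\}) = 1$, using solidity of the relevant components and the defining property \eqref{eq:defin_tm_corresp_to_meas} together with the ``$O\in\cO(X)$ solid'' remark following it.

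The main obstacle is the topological bookkeeping with solid sets: the set $\{f \ge s - \delta\}$ and its components, and the neighborhoods $C^\delta$, need not be solid, so \eqref{eq:defin_tm_corresp_to_meas} does not apply directly to them. The resolution I would pursue is to work with the simple topological measure $\tau_\mu$ via its behavior on connected components: a component of a superlevel set on a manifold with $H^1 = 0$ can be handled by the fact (used implicitly in Aarnes's construction) that $\tau_\mu$ of a connected open or closed set is $1$ iff it contains, or is contained in a controlled way in, a solid set of $\mu$-measure $\ge\tfrac12$; complements of components of superlevel sets of a Morse (hence, by approximation, Lipschitz) function on a surface or manifold with vanishing $H^1$ are connected, so components are solid. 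Thus I would first reduce $f$ to a Morse function $g$ with $\|f - g\|_{C^0}$ small — using that quasi-states are $1$-Lipschitz in $C^0$ (Remark \ref{rem:elementary_pties_qss}(ii)) uniformly in $\mu$, so the estimate for $g$ passes to $f$ — and for Morse $g$ the superlevel sets are (off a finite set of critical values) manifolds with boundary whose components are honestly solid, making the mass-comparison argument above rigorous. The only remaining care is at the finitely many critical levels, which form a measure-zero set and hence do not affect the integral in \eqref{eq:qss_from_tm_via_integration}.
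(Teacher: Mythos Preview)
Your approach is correct but takes a genuinely different route from the paper. You work with the integral formula \eqref{eq:qss_from_tm_via_integration} and aim for a superlevel-set comparison $\tau_\nu(\{f\ge s\})\le\tau_\mu(\{f\ge s-\delta\})$ (for regular values of a Morse approximation), then integrate and change variables. The paper instead bypasses the integral formula and uses the median description of Section \ref{sss:simples_qss_mfds}: with $m=m_\mu(f)$ and $\epsilon>W_\infty(\mu,\nu)$, it takes the closed $\epsilon$-neighborhood $M$ of $m$ and shows that every connected component $D_i$ of $X\setminus M$ satisfies $\nu(D_i)\le\mu(D_i^{\le\epsilon})\le\mu(C_j)\le\tfrac12$, where $C_j$ is the component of $X\setminus m$ containing $D_i$ and the first inequality comes from the alternative formula $W_\infty(\mu,\nu)=\inf\{\epsilon:\mu(A^{\le\epsilon})\ge\nu(A)\text{ for all Borel }A\}$. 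This forces $\tau_\nu(M)=1$, so $m_\nu(f)$ meets $M$, and $|\zeta_\mu(f)-\zeta_\nu(f)|=|f(m_\mu(f))-f(m_\nu(f))|\le\|f\|_{\Lip}\cdot\epsilon$ follows immediately. The paper's argument is shorter and more geometric, avoiding any tracking of which levels $s$ and $s-\delta$ are simultaneously regular; your approach works too and is perhaps more portable to settings without a clean ``median'' picture. One small gap in your reduction: when approximating $f$ by a Morse $g$, you need $\|g\|_{\Lip}\le\|f\|_{\Lip}+\delta$ in addition to $\|f-g\|_{C^0}$ small, as the paper does explicitly, since the estimate you prove for $g$ carries the factor $\|g\|_{\Lip}$.
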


For completeness we include the following continuity result, which does not use metrics.
\begin{thm}\label{thm:weak_conv_qss}Assume $X$ is a closed connected manifold with $H^1(X;\Z) = 0$. Then the map $\Psi$ is continuous, where $\cP_0(X)$ and $\cQ(X)$ are given the weak topology.
\end{thm}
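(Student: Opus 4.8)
The plan is to prove sequential continuity directly, independently of Theorem \ref{thm:metric_conv_qss}. Since a closed manifold is compact metrizable, $C(X)$ is separable, and as every quasi-state is $1$-Lipschitz for the $C^0$-norm (Remark \ref{rem:elementary_pties_qss}) it is determined by its values on a fixed countable $C^0$-dense set; hence both $(\cP_0(X),\mathrm{weak})$ and $(\cQ(X),\mathrm{weak})$ are metrizable, and it suffices to prove sequential continuity. So fix $\mu_n\to\mu$ weakly in $\cP_0(X)$; we must show $\zeta_{\mu_n}(f)\to\zeta_\mu(f)$ for every $f\in C(X)$. A standard three-$\varepsilon$ argument, using the $1$-Lipschitz property once more together with the $C^0$-density of Morse functions, reduces this to $f$ Morse.

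The heart of the matter is the one-sided estimate $\limsup_n\zeta_{\mu_n}(f)\le\zeta_\mu(f)$ for $f$ Morse. Granting it and applying it to the Morse function $-f$ gives $\limsup_n\zeta_{\mu_n}(-f)\le\zeta_\mu(-f)$; but $\zeta(-f)=-\zeta(f)$ for every quasi-state $\zeta$, by axiom (iii) (linearity on $C(f)$, applied to $\phi(t)=-t$), so this reads $\liminf_n\zeta_{\mu_n}(f)\ge\zeta_\mu(f)$, and the two bounds give $\zeta_{\mu_n}(f)\to\zeta_\mu(f)$. To prove the estimate, recall from \eqref{eq:qss_from_tm_via_integration} that $\zeta_{\mu_n}(f)=\min f+\int_{\min f}^{\max f}\tau_{\mu_n}(\{f\ge s\})\,ds$, and that since $\tau_\mu$ is simple the map $s\mapsto\tau_\mu(\{f\ge s\})$ is non-increasing and $\{0,1\}$-valued, so $\zeta_\mu(f)=\sup\{s:\tau_\mu(\{f\ge s\})=1\}$; we may assume $\zeta_\mu(f)<\max f$, otherwise there is nothing to prove. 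Fix $\varepsilon>0$ and choose a regular value $s_0$ of $f$ with $\zeta_\mu(f)<s_0<\min(\zeta_\mu(f)+\varepsilon,\max f)$ — possible since $f$ has finitely many critical values. Then $\tau_\mu(\{f\ge s_0\})=0$, and the key claim is that $\tau_{\mu_n}(\{f\ge s_0\})=0$ for all large $n$; granting it, monotonicity in $s$ gives $\tau_{\mu_n}(\{f\ge s\})=0$ for all $s\ge s_0$ and large $n$, whence $\zeta_{\mu_n}(f)\le s_0<\zeta_\mu(f)+\varepsilon$, and letting $\varepsilon\downarrow 0$ finishes the estimate.

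For the key claim, write the compact manifold-with-boundary $\{f\ge s_0\}$ as a disjoint union of finitely many connected components $C_1,\dots,C_m$; by additivity of topological measures $\tau_\mu(\{f\ge s_0\})=\sum_i\tau_\mu(C_i)$ (and likewise for each $\mu_n$), so $\tau_\mu(C_i)=0$ for every $i$, and then $\tau_\mu(X\setminus C_i)=1$ by additivity for the partition $X=C_i\sqcup(X\setminus C_i)$. Now the open set $X\setminus C_i$ has finitely many connected components $E_1,\dots,E_l$, and each $E_j$ is solid: it is connected, and its complement $C_i\cup\bigcup_{j'\ne j}E_{j'}$ is connected, because $X$ is connected and locally connected, which forces $\partial E_{j'}\subseteq C_i$ for every $j'$. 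Hence $1=\tau_\mu(X\setminus C_i)=\sum_j\tau_\mu(E_j)$, and since $\tau_\mu$ equals $1$ on a solid open set of $\mu$-measure $>\tfrac12$ and $0$ otherwise, exactly one index $j_i$ satisfies $\mu(E_{j_i})>\tfrac12$. As $E_{j_i}$ is open and $\mu_n\to\mu$ weakly, the Portmanteau theorem gives $\liminf_n\mu_n(E_{j_i})\ge\mu(E_{j_i})>\tfrac12$, so $\mu_n(E_{j_i})>\tfrac12$ for all large $n$; then $\tau_{\mu_n}(E_{j_i})=1$, hence $\tau_{\mu_n}(X\setminus C_i)\ge\tau_{\mu_n}(E_{j_i})=1$, hence $\tau_{\mu_n}(C_i)=1-\tau_{\mu_n}(X\setminus C_i)=0$. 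Taking $n$ past the finitely many thresholds for $i=1,\dots,m$ yields $\tau_{\mu_n}(\{f\ge s_0\})=0$, as required.

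The one genuinely delicate point is the previous paragraph's way of computing $\tau_\mu$ on a superlevel set through solid sets: one must pass not to the components $C_i$ (which need not be solid) but to the \emph{complementary} components $E_j$, which are automatically solid and open; after that, everything is forced by the Aarnes description of $\tau_\mu$ on solid sets (recalled in Section \ref{sss:Aarnes_construction_Psi}) together with lower semicontinuity of $\nu\mapsto\nu(E_j)$ on open sets under weak convergence. Note that the hypothesis $\mu,\mu_n\in\cP_0(X)$ enters only to guarantee that $\tau_\mu,\tau_{\mu_n}$, and hence $\zeta_\mu,\zeta_{\mu_n}$, are defined, and that the symmetry $f\leftrightarrow -f$ is what lets us avoid analysing the harder case $\tau_\mu(C_i)=1$ directly.
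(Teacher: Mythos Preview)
Your proof is correct, and it takes a genuinely different route from the paper's own argument.

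The paper works directly with the median picture developed in Section~\ref{sss:simples_qss_mfds}: it fixes the $\mu$-median $m=m_\mu(f)$, takes a small Reeb-graph neighborhood $M$ of $m$, and shows that the closed solid pieces $B_i=C_i\cap(X\setminus M)$ satisfy $\mu(B_i)<\tfrac12$ (here the hypothesis $\tfrac12\notin\Spec(\mu)$ is used explicitly to obtain \emph{strict} inequality). Portmanteau on \emph{closed} sets then gives $\mu_n(B_i)<\tfrac12$ for large $n$, hence $\tau_{\mu_n}(\ol M)=1$, which forces $m_{\mu_n}(f)$ to meet $\ol M$ and yields a two-sided bound $|\zeta_{\mu_n}(f)-\zeta_\mu(f)|\le\epsilon$ in one stroke.

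Your argument bypasses the median/Reeb machinery entirely: you read off $\zeta_\mu(f)$ from the integral formula~\eqref{eq:qss_from_tm_via_integration} as the jump point of $s\mapsto\tau_\mu(\{f\ge s\})$, pick a regular $s_0$ just above it, and reduce to showing $\tau_{\mu_n}(\{f\ge s_0\})=0$ eventually. The key device --- passing from the components $C_i$ of $\{f\ge s_0\}$ to the solid \emph{open} complementary components $E_j$ --- lets you apply Portmanteau on \emph{open} sets, where the relevant inequality $\mu(E_{j_i})>\tfrac12$ is automatically strict by the description of $\tau_\mu$ on solid open sets; thus you never need to invoke $\tfrac12\notin\Spec(\mu)$ beyond the bare existence of $\tau_\mu,\tau_{\mu_n}$. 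The price is that you only get a one-sided estimate, which you then double via the symmetry $f\leftrightarrow -f$ and $\zeta(-f)=-\zeta(f)$. Both proofs are short; yours is arguably more self-contained for this particular statement, while the paper's approach dovetails naturally with the median framework used throughout and with the proof of Theorem~\ref{thm:metric_conv_qss}.
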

\noindent Theorems \ref{thm:metric_conv_qss}, \ref{thm:weak_conv_qss} are proved in Section \ref{ss:approx_results}. It is interesting to note that they do not simply follow from one another, even though their proofs use similar ideas. Let us describe the reason in more detail. It is well-known that the $p$-Wasserstein distances for finite $p$ induce the weak topology on $\cP(X)$ \cite{Villani_Optimal_transport}. Similarly we have the following proposition, proved in Section \ref{ss:approx_results}:
\begin{prop}\label{prop:W_1_induces_weak_top_qss}If $(X,d)$ is a compact metric space, then the metric $W_1$ on $\cQ(X)$ induces the weak topology.
\end{prop}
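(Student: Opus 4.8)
The plan is to show that the $W_1$-metric topology on $\cQ(X)$ and the weak topology (which I take to mean the topology of pointwise convergence on $C(X)$, in exact analogy with the weak topology on $\cP(X)$) have the same open sets, by proving each is contained in the other. I would first record three elementary observations. (a) For a constant $c$, every quasi-state satisfies $\zeta(f+c) = \zeta(f)+c$ and $\zeta(c)=c$, since $1$ and $f$ both lie in $C(f+c)$, on which $\zeta$ is linear with $\zeta(1)=1$; consequently, for a fixed basepoint $x_0\in X$, the quantity $\zeta(f)-\zeta'(f)$ equals $\zeta(f-f(x_0))-\zeta'(f-f(x_0))$, so that $W_1(\zeta,\zeta') = \sup\{|\zeta(f)-\zeta'(f)| : f\in B\}$ where $B = \{f\in\Lip(X,d): \|f\|_{\Lip}\le 1,\ f(x_0)=0\}$. (b) By Remark~\ref{rem:elementary_pties_qss} every quasi-state is $1$-Lipschitz in the $C^0$-norm, and combined with $\zeta(0)=0$ this gives $|\zeta(f)|\le\|f\|_{C^0}$; thus $\cQ(X)$ is a uniformly $C^0$-equicontinuous family of functionals. (c) The set $B$ is equi-Lipschitz, hence equicontinuous, and bounded in $C^0$ by $\diam X$, so by the Arzel\`a--Ascoli theorem $B$ is a \emph{compact} subset of $(C(X),\|\cdot\|_{C^0})$. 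As a byproduct, $W_1\le 2\diam X<\infty$ and $W_1$ separates points because Lipschitz functions are $C^0$-dense in $C(X)$, so $W_1$ is genuinely a metric.

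For the inclusion \emph{weak $\subseteq$ $W_1$}, I would check that each evaluation $\zeta\mapsto\zeta(f)$, $f\in C(X)$, is $W_1$-continuous, which suffices since these maps generate the weak topology. For Lipschitz $f$ this is immediate from $|\zeta(f)-\zeta'(f)|\le\|f\|_{\Lip}\,W_1(\zeta,\zeta')$; for general $f$ one approximates $f$ in the $C^0$-norm by a Lipschitz function and uses the uniform $C^0$-Lipschitz bound from (b) to absorb the error.

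For the reverse inclusion \emph{$W_1$ $\subseteq$ weak}, which is the substantive step, I would fix $\zeta_0\in\cQ(X)$ and $\epsilon>0$ and construct a weak neighborhood $U$ of $\zeta_0$ contained in the $W_1$-ball of radius $\epsilon$. Using the compactness (c), cover $B$ by finitely many $C^0$-balls of radius $\epsilon/4$ centered at $f_1,\dots,f_k\in B$, and set $U = \{\zeta\in\cQ(X): |\zeta(f_i)-\zeta_0(f_i)|<\epsilon/4,\ i=1,\dots,k\}$. Then for $\zeta\in U$ and any $f\in B$, choosing $f_i$ with $\|f-f_i\|_{C^0}<\epsilon/4$ and applying the $C^0$-Lipschitz property of $\zeta$ and of $\zeta_0$ once each gives $|\zeta(f)-\zeta_0(f)|<3\epsilon/4$; taking the supremum over $f\in B$ yields $W_1(\zeta,\zeta_0)<\epsilon$. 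Since the open $W_1$-balls form a basis, this shows every $W_1$-open set is weakly open, and together with the previous paragraph it proves the two topologies coincide.

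The step I expect to be the real obstacle — though it is not a deep one — is the passage in the last paragraph to a \emph{finite} collection of test functions, i.e.\ the $C^0$-compactness of the normalized unit Lipschitz ball $B$. This hinges on the normalization $f(x_0)=0$ (to make $B$ bounded in $C^0$) combined with Arzel\`a--Ascoli, and it is precisely the place where the argument mirrors the classical proof that $W_1$ metrizes weak convergence of probability measures on a compact metric space. Beyond the affine identity $\zeta(f+c)=\zeta(f)+c$ and the $C^0$-Lipschitz property of quasi-states, no linearity of $\zeta$ enters, so the classical proof transfers essentially verbatim.
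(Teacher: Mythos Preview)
Your proof is correct and follows essentially the same approach as the paper: both directions rely on the $C^0$-Lipschitz property of quasi-states together with the Arzel\`a--Ascoli compactness of the normalized unit Lipschitz ball $B=\cL$. The only difference is cosmetic: for the harder direction the paper extracts, for a weakly convergent net, a $C^0$-convergent subnet of the maximizing test functions $f_i\in\cL$, whereas you use a finite $\epsilon/4$-cover of $B$ to produce a basic weak neighborhood directly---two interchangeable ways of cashing in the same compactness.
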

\noindent However the topology induced on $\cP(X)$ by $W_\infty$ is strictly stronger than the weak topology, for instance on $[0,1]$ we have $W_1(\frac{1}{n}\delta_0 + \frac{n-1}{n}\delta_1,\delta_1)\to 0$ as $n\to\infty$, while $W_\infty(\frac{1}{n}\delta_0 + \frac{n-1}{n}\delta_1,\delta_1) = 1$ for all $n$. For this reason we cannot simply say that Theorem \ref{thm:weak_conv_qss} follows from Theorem \ref{thm:metric_conv_qss}, since the topologies on $\cP_0(X)$ appearing in the two theorems are different. In the opposite direction, Theorem \ref{thm:weak_conv_qss} implies that $\Psi$ is continuous with respect to the weak topology on $\cQ(X)$ and the topology induced on $\cP_0(X)$ by $W_\infty$, but it does not imply the Lipschitz property.

\subsection{Non-approximation results}\label{ss:non_approx}

In this section we present a wide class of quasi-states which \emph{cannot} be approximated by quasi-states of the form $\Psi(\mu)$ for $\mu \in \cP_0(X)$. The main application is the fact that symplectic quasi-states constructed via Floer homology on symplectic manifolds of dimension at least $4$ cannot be approximated by such quasi-states.

We have the following general non-approximation result for quasi-states ``supported'' on submanifolds of codimension at least $2$.
\begin{thm}\label{thm:general_non_approximation}
Let $X$ be a closed connected Riemannian manifold with $H^1(X;\Z) = 0$, and let $d$ be the induced distance function. Let $\zeta$ and $\tau$ be a quasi-state and a topological measure on $X$, corresponding to one another by the Aarnes representation theorem. Assume that there are closed connected submanifolds $Y_j$, $j=1,\dots,4$ of codimension at least $2$, whose triple intersections are all empty, such that $\tau(Y_j)=1$ for all $j$. Then there is a constant $c > 0$ such that for all $\mu \in \cP_0(X)$ we have
$$W_1(\zeta,\zeta_\mu) \geq c\,.$$
\end{thm}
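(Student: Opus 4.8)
The plan is to fix a constant $c>0$ depending only on $X$, its metric, and the submanifolds $Y_j$, and to show that for every $\mu\in\cP_0(X)$ there is a $1$-Lipschitz function witnessing $W_1(\zeta,\zeta_\mu)\ge c$. The starting observation is that a topological measure of full mass on a closed set forces the associated quasi-state to localize there: if a topological measure $\sigma$ satisfies $\sigma(Y)=1$ for some $Y\in\cC(X)$, then $\sigma(X\setminus Y)=0$ by additivity, and since in \eqref{eq:qss_from_tm_via_integration} the integrand $\sigma(\{g\ge s\})$ equals $1$ for $s<\min_Y g$ and $0$ for $s>\max_Y g$, the associated quasi-state $\xi$ obeys $\min_Y g\le\xi(g)\le\max_Y g$ for every $g\in C(X)$. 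Applied to $\zeta$, $\tau$ and the hypothesis $\tau(Y_j)=1$, this gives $\zeta(f)=0$ whenever $f$ vanishes identically on some $Y_j$.

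The geometric input comes next. Because each $Y_j$ is a closed connected submanifold of codimension $\ge 2$, there is an $r_0>0$ so that for every $s\in(0,r_0]$ and every $j$ the open neighborhood $U_s(Y_j)=\{x:d(x,Y_j)<s\}$ is a tubular neighborhood; it is then connected (a disk bundle over the connected $Y_j$), and so is its complement $\{x:d(x,Y_j)\ge s\}=X\setminus U_s(Y_j)$, being a deformation retract of $X\setminus Y_j$ — and $X\setminus Y_j$ is connected since removing a codimension-$\ge 2$ subset from a connected manifold cannot disconnect it (any path may be pushed off $Y_j$ by transversality). Hence $X\setminus U_s(Y_j)$ is a closed solid set, so by \eqref{eq:defin_tm_corresp_to_meas} $\tau_\mu\bigl(X\setminus U_s(Y_j)\bigr)$ equals $1$ when $\mu\bigl(U_s(Y_j)\bigr)\le\tfrac12$ and $0$ otherwise, for any $\mu\in\cP_0(X)$. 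Shrinking $r_0$ if necessary, a routine compactness argument using that the $Y_j$ are compact and all triple intersections $Y_i\cap Y_j\cap Y_k$ are empty lets us also assume $\{x:d(x,Y_i)\le s\}\cap\{x:d(x,Y_j)\le s\}\cap\{x:d(x,Y_k)\le s\}=\emptyset$ for all $s\le r_0$ and all distinct $i,j,k$. Put $c:=r_0/2$ and, for each $j$, $f_j:=\min\bigl(d(\cdot,Y_j),r_0\bigr)$; this function is $1$-Lipschitz and vanishes on $Y_j$, so $\zeta(f_j)=0$ by the first paragraph.

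Now suppose some $\mu\in\cP_0(X)$ had $W_1(\zeta,\zeta_\mu)<c$. Then $\zeta_\mu(f_j)=\lvert\zeta_\mu(f_j)-\zeta(f_j)\rvert\le W_1(\zeta,\zeta_\mu)<c$ for each $j$, and this forces $\mu\bigl(U_c(Y_j)\bigr)>\tfrac12$: if instead $\mu\bigl(U_c(Y_j)\bigr)\le\tfrac12$, then $\mu\bigl(U_s(Y_j)\bigr)\le\tfrac12$ for all $s\le c$, so by the previous paragraph $\tau_\mu(\{f_j\ge s\})=\tau_\mu\bigl(X\setminus U_s(Y_j)\bigr)=1$ for $0<s<\min(c,\max_X f_j)$, and \eqref{eq:qss_from_tm_via_integration} gives $\zeta_\mu(f_j)\ge\min(c,\max_X f_j)$; if $\max_X f_j\ge c$ this contradicts $\zeta_\mu(f_j)<c$, and if $\max_X f_j<c$ then $d(\cdot,Y_j)<c$ everywhere, so $U_c(Y_j)=X$ and $\mu\bigl(U_c(Y_j)\bigr)=1$, again a contradiction. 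Therefore the closed sets $K_j:=\{x:d(x,Y_j)\le c\}\supseteq U_c(Y_j)$ satisfy $\mu(K_j)>\tfrac12$ for $j=1,\dots,4$, and, since $c<r_0$, all their triple (hence quadruple) intersections are empty. Inclusion–exclusion then gives $\sum_{i<j}\mu(K_i\cap K_j)=\sum_j\mu(K_j)-\mu\bigl(\bigcup_j K_j\bigr)>4\cdot\tfrac12-1=1$; but the pairwise intersection of any two of the six sets $K_i\cap K_j$ lies in a triple intersection $K_a\cap K_b\cap K_c$, hence is empty, so $\sum_{i<j}\mu(K_i\cap K_j)=\mu\bigl(\bigcup_{i<j}(K_i\cap K_j)\bigr)\le1$ — a contradiction. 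Thus no such $\mu$ exists, i.e.\ $W_1(\zeta,\zeta_\mu)\ge c$ for all $\mu\in\cP_0(X)$.

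The main obstacle is the geometric step of the second paragraph: one must verify that the complement of a small tubular neighborhood of $Y_j$ is a solid set, and it is precisely here that the codimension-$\ge 2$ hypothesis is indispensable — in codimension $1$ the complement would generically break up, and $\tau_\mu$ would no longer be pinned down by \eqref{eq:defin_tm_corresp_to_meas} — together with the uniform choice of $r_0$ that transfers the empty-triple-intersection property to the thickenings of the $Y_j$. Everything else — the localization of $\zeta$ on the $Y_j$, extracting $\mu(U_c(Y_j))>\tfrac12$ from the integral formula, and the inclusion–exclusion count, which is what exploits having four submanifolds rather than three — is routine.
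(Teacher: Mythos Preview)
Your proof is correct and follows essentially the same route as the paper: both use the distance function to $Y_j$ as the $1$-Lipschitz witness, the solidity of tubular neighborhoods of codimension-$\ge 2$ submanifolds to pin down $\tau_\mu$ via \eqref{eq:defin_tm_corresp_to_meas}, and the inclusion--exclusion count for four sets with empty triple intersections. The only difference is that the paper isolates the measure-theoretic step as a separate lemma and argues directly (find $j$ with $\mu(Y_j^{<\epsilon})\le\tfrac12$, then compute $\zeta_\mu(d(\cdot,Y_j))\ge\epsilon$), whereas you run the contrapositive; the content is identical.
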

\noindent In particular, $\zeta$ cannot be weakly approximated by quasi-states of the form $\zeta_\mu$ for $\mu \in \cP_0(X)$, thanks to Proposition \ref{prop:W_1_induces_weak_top_qss}.

A typical application of this theorem is in a situation when $\zeta$ is invariant under a sufficiently large symmetry group and when there is a closed connected submanifold $Y \subset X$ of codimension at least $2$ with $\tau (Y) = 1$. Here the result holds if there are symmetries $g_1,g_2,g_3$ of $\zeta$ such that the quadruple of sets $Y,g_1(Y),g_2(Y),g_3(Y)$ has empty triple intersections. What follows is a general result of this type in the context of symplectic geometry.
\begin{thm}\label{thm:non_approx_sympl_mfds}
Let $(X,\omega)$ be a closed connected symplectic manifold of dimension $\geq 4$ such that $H^1(X;\Z) = 0$, let $\zeta$ be a quasi-state invariant under the action of the Hamiltonian group $\Ham(X,\omega)$, and assume that there is a closed connected Lagrangian $L \subset X$ with the property that $\tau(L) = 1$, where $\tau$ is the topological measure associated to $\zeta$. Then, given a Riemannian metric on $X$, there is a constant $c > 0$ such that
$$W_1(\zeta,\zeta_\mu) \geq c$$
for all $\mu \in \cP_0(X)$.
\end{thm}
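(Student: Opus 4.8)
The plan is to reduce Theorem \ref{thm:non_approx_sympl_mfds} to the general non-approximation result, Theorem \ref{thm:general_non_approximation}. Since $\dim X \geq 4$, a closed Lagrangian has codimension $n = \tfrac12\dim X \geq 2$; so it suffices to exhibit $\phi_1,\phi_2,\phi_3 \in \Ham(X,\omega)$ such that the four closed connected submanifolds $L,\phi_1(L),\phi_2(L),\phi_3(L)$ have all triple intersections empty and each carries $\tau$-mass $1$. The mass condition is automatic: one first checks that $\Ham$-invariance of $\zeta$ descends to $\tau$. Indeed, from $\tau(K) = \inf\{\zeta(f)\,|\,f \geq \chi_K\}$ and the equivalence $f \geq \chi_{\phi(K)} \iff f\circ\phi \geq \chi_K$, together with $\zeta(f\circ\phi)=\zeta(f)$, one gets $\tau(\phi(K)) = \tau(K)$ for every $\phi\in\Ham(X,\omega)$ and every closed $K$; in particular $\tau(\phi_i(L)) = \tau(L) = 1$ no matter which $\phi_i$ we pick. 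Note also (using additivity of $\tau$ on disjoint closed sets) that two sets of $\tau$-mass $1$ can never be disjoint, so $L$ is automatically non-displaceable and all six pairs among $L,\phi_1(L),\phi_2(L),\phi_3(L)$ will necessarily intersect --- which is precisely why four submanifolds, with only the triple intersections controlled, are the right thing to aim for.

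The geometric heart of the argument is a Hamiltonian flexibility lemma: given finitely many closed submanifolds $Z_1,\dots,Z_k \subset X$ and a finite set $A\subset X$, there exists $\phi\in\Ham(X,\omega)$ with $\phi(L)\pitchfork Z_i$ for every $i$ and $\phi(L)\cap A = \emptyset$. I would prove this using a Weinstein tubular neighborhood $U \cong T^*L$ of $L$: the time-one Hamiltonian flow of an extension of $\pi^*h$, for a small function $h$ on $L$, moves $L$ to a $C^1$-small Lagrangian meeting $L$ exactly at $\Crit(h)$, transversally when $h$ is Morse; composing with further Hamiltonian pushes supported in small Darboux balls realizes arbitrary small local perturbations near finitely many points. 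A parametric transversality / Baire category argument over such families then produces the desired $\phi$.

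Granting the lemma, I would build the $\phi_i$ iteratively. First pick $\phi_1$ with $\phi_1(L)\pitchfork L$, so $L\cap\phi_1(L)$ is finite (being a compact $0$-manifold). Then pick $\phi_2$ with $\phi_2(L)$ transverse to $L$ and to $\phi_1(L)$ and disjoint from the finite set $L\cap\phi_1(L)$; now $L\cap\phi_2(L)$ and $\phi_1(L)\cap\phi_2(L)$ are finite. Finally pick $\phi_3$ with $\phi_3(L)$ transverse to $L$, $\phi_1(L)$ and $\phi_2(L)$ and disjoint from the three finite sets $L\cap\phi_1(L)$, $L\cap\phi_2(L)$, $\phi_1(L)\cap\phi_2(L)$. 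Checking the four possible triples one by one --- e.g.\ $L\cap\phi_1(L)\cap\phi_2(L)=\emptyset$ because $\phi_2(L)$ misses $L\cap\phi_1(L)$, and $\phi_1(L)\cap\phi_2(L)\cap\phi_3(L)=\emptyset$ because $\phi_3(L)$ misses $\phi_1(L)\cap\phi_2(L)$ --- shows all triple intersections are empty. Then $Y_1=L$, $Y_2=\phi_1(L)$, $Y_3=\phi_2(L)$, $Y_4=\phi_3(L)$ satisfy all hypotheses of Theorem \ref{thm:general_non_approximation}, which yields a constant $c>0$ with $W_1(\zeta,\zeta_\mu)\geq c$ for all $\mu\in\cP_0(X)$, as claimed.

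The main obstacle is the Hamiltonian flexibility lemma. Transversality has to be achieved within the infinite-dimensional group $\Ham(X,\omega)$, not by arbitrary ambient diffeomorphisms; moreover, if $b_1(L)>0$ then not every Lagrangian $C^1$-close to $L$ is Hamiltonian-isotopic to $L$, so one genuinely has to verify that the subfamily of deformations coming from Hamiltonian flows is still rich enough to be transverse to prescribed submanifolds and to avoid prescribed points. This is where the local Darboux/Weinstein constructions and the parametric transversality theorem do the real work; everything else is bookkeeping.
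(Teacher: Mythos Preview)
Your overall strategy is correct and coincides with the paper's: reduce to Theorem~\ref{thm:general_non_approximation} by producing four Hamiltonian-isotopic copies of $L$ with empty triple intersections, and invoke $\Ham$-invariance of $\tau$ to get $\tau$-mass $1$ on each. The execution, however, differs. The paper avoids your iterative transversality construction entirely by working inside a single Weinstein neighborhood $V\subset T^*L$: it takes $f_0\equiv 0$ and generic Morse functions $f_1,f_2,f_3$ on $L$ chosen so that the critical sets of the differences $f_i-f_j$ are pairwise disjoint, and sets $L_i=\psi^{-1}(\Gamma_{df_i})$. Since $\Gamma_{df_i}\cap\Gamma_{df_j}$ is in bijection with $\Crit(f_i-f_j)$, a triple intersection would force a point to lie in two of these critical sets simultaneously, which the generic choice precludes; and since each $\Gamma_{df_i}$ is the graph of an \emph{exact} $1$-form, each $L_i$ is automatically Hamiltonian isotopic to $L$ --- which dissolves precisely the ``main obstacle'' you flag about $b_1(L)>0$. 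Your route via a Hamiltonian flexibility lemma (transversality to prescribed submanifolds plus avoidance of finite sets) is valid and more general-purpose, but it requires the parametric transversality argument you only sketch, whereas the paper's four-at-once construction turns the whole thing into an elementary genericity statement about finitely many Morse functions on $L$.
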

\begin{rem}
The assumption on the dimension is essential. Indeed, if $\zeta$ is the median quasi-state on $S^2$ from Example \ref{exam:median_Aarnes_qss} and $\tau$ is the corresponding topological measure, then $\zeta$ is invariant under $\Ham(S^2,\omega)$, where $\omega$ is the standard round symplectic form. A great circle $L \subset S^2$ is a Lagrangian submanifold with $\tau(L) = 1$. Of course, $\zeta$ is the value of $\Psi$ at the Lebesgue measure.
\end{rem}

The following corollary is the application of interest to us. Here $\zeta$ refers to symplectic quasi-states constructed in \cite{Entov_Polterovich_Calabi_quasimorphism_quantum_homology}.
\begin{coroll}\label{coroll:non_approx_CPn_S2_x_S2}
Let $X$ be $\C P^n$, $n\geq 2$, or $S^2 \times S^2$, endowed with the standard K\"ahler structure. Then there is a constant $c > 0$ such that for all $\mu \in \cP_0(X)$ we have
$$W_1(\zeta,\zeta_\mu) \geq c\,.$$
\end{coroll}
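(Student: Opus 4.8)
The plan is to derive Corollary \ref{coroll:non_approx_CPn_S2_x_S2} directly from Theorem \ref{thm:non_approx_sympl_mfds}. Both $\C P^n$ (for $n \geq 2$) and $S^2 \times S^2$ are closed connected symplectic manifolds of dimension $\geq 4$ with $H^1(X;\Z) = 0$ (indeed they are simply connected). So the only things to verify are: (a) the quasi-state $\zeta$ from \cite{Entov_Polterovich_Calabi_quasimorphism_quantum_homology} on each of these manifolds is invariant under the Hamiltonian group $\Ham(X,\omega)$, and (b) each manifold carries a closed connected Lagrangian submanifold $L$ whose associated topological-measure value is $\tau(L) = 1$.

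For (a), I would simply cite the construction in \cite{Entov_Polterovich_Calabi_quasimorphism_quantum_homology} (and the companion \cite{Entov_Polterovich_Quasi_states_symplectic_intersections}): the symplectic quasi-states built from the Oh--Schwarz spectral invariants associated to an idempotent of the quantum homology are by construction invariant under $\Ham(X,\omega)$, since spectral invariants change only by the Calabi-type shift under Hamiltonian conjugation and the normalization used in defining $\zeta$ kills that shift; on monotone $X$ with semisimple quantum homology the relevant invariant $\zeta = \zeta_e$ exists. Both $\C P^n$ and $S^2 \times S^2$ (with the monotone Kähler form) have semisimple quantum homology, which is exactly what makes the Entov--Polterovich quasi-state available on them in the first place.

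For (b), the natural Lagrangian to take is $\R P^n \subset \C P^n$ and the anti-diagonal $\ol{\Delta} = \{(x, \iota(x))\} \subset S^2 \times S^2$, where $\iota$ is the orientation-reversing involution, i.e.\ the antipodal map — both are monotone Lagrangians that are known to be \emph{heavy} in the sense of Entov--Polterovich. The key input is the theorem of Entov--Polterovich that a heavy subset $S$ satisfies $\zeta(f) \geq \inf_S f$ for all $f$, which via \eqref{eq:qss_from_tm_via_integration} forces $\tau(S) = 1$: if $\tau(S) < 1$ one could build a function $f$ equal to its minimum on $S$ but with $\zeta(f) < \inf_S f$, a contradiction. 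Heaviness of $\R P^n$ and of the anti-diagonal in $S^2\times S^2$ with respect to the quantum-homology quasi-state is established in \cite{Entov_Polterovich_Quasi_states_symplectic_intersections} (via Floer-theoretic computations with the Lagrangian Floer homology / the fact that these are ``stem''-like or at least superheavy for the relevant idempotent). Granting all this, Theorem \ref{thm:non_approx_sympl_mfds} applies verbatim and yields the constant $c > 0$.

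The main obstacle is purely bookkeeping: making sure the cited heaviness results are stated for precisely the quasi-state $\zeta$ of \cite{Entov_Polterovich_Calabi_quasimorphism_quantum_homology} that the corollary names (rather than for some other idempotent's quasi-state), and that the monotone Kähler normalization matches. In particular for $S^2 \times S^2$ one must use the anti-diagonal rather than the diagonal — the diagonal is displaceable-free but it is the anti-diagonal that is heavy for the EP quasi-state — so I would be careful to invoke $\ol{\Delta}$; an alternative, if one prefers a cleaner citation, is to note that the Clifford-type or monotone Lagrangian torus fiber is heavy in $\C P^n$ as well, but $\R P^n$ is the cleanest choice. Once the correct Lagrangian and the correct heaviness statement are pinned down, the corollary is immediate.
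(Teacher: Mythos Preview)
Your strategy is exactly the paper's: check the hypotheses of Theorem \ref{thm:non_approx_sympl_mfds} and invoke it. The only difference is the choice of Lagrangian. The paper takes the Clifford torus $T \subset \C P^n$ and the product of equators $L \subset S^2 \times S^2$, citing \cite{Entov_Polterovich_rigid_subsets_sympl_mfds} for $\tau(T)=\tau(L)=1$; you take $\R P^n$ and the anti-diagonal. Both choices are legitimate, but the paper's tori are \emph{stems} (hence superheavy for every symplectic quasi-state on $X$), which neatly eliminates the very bookkeeping worry you raise about matching the Lagrangian to the specific idempotent defining $\zeta$. Your $\R P^n$ is fine since $QH(\C P^n)$ is a field and there is only one $\zeta$; for $S^2\times S^2$ the anti-diagonal is (super)heavy only for one of the two idempotents, so the product of equators is the safer pick. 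Also, the heaviness results you need are in \cite{Entov_Polterovich_rigid_subsets_sympl_mfds}, not \cite{Entov_Polterovich_Quasi_states_symplectic_intersections}.
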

\noindent This result is interesting, because quasi-states of the form $\zeta_\mu$, $\mu \in \cP_0(X)$ are constructed using ``soft'' techniques, referring in symplectic topology to results which do not use elliptic PDEs such as pseudoholomorphic curves (see \cite{Gromov_Pseudo_holo_curves_sympl_mfds}). On the other hand the quasi-states $\zeta$ are constructed using ``hard'' techniques of symplectic topology, and it would be indeed very surprising if $\zeta$ could be obtained as a weak limit of the $\zeta_\mu$.

\begin{rem}
There are many more examples for which Corollary \ref{coroll:non_approx_CPn_S2_x_S2} holds, see \cite{Entov_Polterovich_rigid_subsets_sympl_mfds}, however for the sake of simplicity we only mention these two examples, which suffice to illustrate our point.
\end{rem}

We do not know the optimal value of the constant $c$ in Corollary \ref{coroll:non_approx_CPn_S2_x_S2}, which prompts the following
\begin{question}
What is the optimal value of the constant $c$?
\end{question}
\noindent An explicit sharp value would be worthwhile to compute.

\begin{acknow}We would like to thank Leonid Polterovich for suggesting the problems which gave rise to the results of this paper, for his constant interest, and for numerous discussions. We also thank Asaf Kislev, Pazit Haim-Kislev, and Eyal Ackerman for discussions regarding algorithms computing Reeb graphs of PL functions and the quasi-states corresponding to discrete measures. A.\ D.\ is partially supported by the Israel Science Foundation grant 1380/13. F.\ Z.\ is partially supported by the Israel Science Foundation grant 1825/14, and by grant number 1281 from the GIF, the German--Israeli Foundation for Scientific Research and Development.
\end{acknow}

\section{Proofs}\label{s:proofs}

\subsection{Preliminaries}\label{ss:prelims}

\subsubsection{Simple quasi-states on manifolds}\label{sss:simples_qss_mfds}

Theorem \ref{thm:main_result} is concerned with the median quasi-state, which is a simple quasi-state on $S^2$. Here we describe a general method of computing simple quasi-states on manifolds, in particular obtaining the descriptions of Example \ref{exam:median_Aarnes_qss}. The material here is not new. We gather it here for completeness and to establish notation.

Let $Y$ be a compact tree, that is a finite simply-connected $1$-dimensional CW complex. The result of \cite{Wheeler_Quasi_meas_dim_thry} says that a quasi-state on $Y$ must be linear. If $\nu \in \cP_0(Y)$, it follows that $\Psi(\nu) \in \cQ(Y)$, being a simple quasi-state on $Y$, is the evaluation at a point $y_0 \in Y$ by Remark \ref{rem:elementary_pties_qss}. The definition of $\Psi(\nu)$ in this case says that $y_0$ is the unique point with the property that every connected component $C$ of $Y - \{y_0\}$ satisfies $\nu(C) \leq \frac 1 2$. Indeed, we have $\tau_\nu(\{y_0\}) = 1$, therefore, if $C_1,\dots,C_k$ are the connected components of $Y - \{y_0\}$, we have $\tau_\nu(C_j) = 0 $ for all $j$, and since $C_j$ are all open solid sets, it follows that $\nu(C_j) \leq \frac 1 2$. The uniqueness follows from the assumption that $\frac 1 2$ does not belong to the spectrum of $\nu$.

Let now $X$ be a closed connected manifold with $H^1(X;\Z) = 0$ and let $F \in C^\infty(X)$ be a Morse function. Let $\Gamma_F$ be the Reeb graph of $F$ \cite{Reeb_Sur_points_singuliers_forme_Pfaff_completement_integrable_fonc_numerique}, that is the quotient space of $X$ by the equivalence relation whose equivalence classes are the connected components of level sets of $F$. Let $\pi \fc X \to \Gamma_F$ be the quotient map. It is well-known that $\pi$ is surjective on fundamental groups, which implies that $\Gamma_F$ is a tree. Let $\mu \in \cP_0(X)$. It can be checked that if $C \subset \Gamma_F$ is closed and solid, then so is $\pi^{-1}(C)$, which implies that $\pi_*\mu \in \cP_0(\Gamma_F)$, and moreover that $\pi_*\tau_\mu = \tau_{\pi_*\mu}$. It follows that $\Psi(\pi_*\mu) = \pi_*\Psi(\mu)$, meaning $\Psi(\pi_*\mu)(g) = \Psi(\mu)(g\circ \pi)$ for $g \in C(\Gamma_F)$.

The above considerations then imply that $\Psi(\pi_*\mu) \in \cQ(\Gamma_F)$ is the evaluation at a point, which we refer to as the $\mu$-median of $F$, denoted $m_\mu(F) \in \Gamma_F$. In particular, if $\ol F \in C(\Gamma_F)$ is such that $F = \ol F \circ \pi$, then
$$\ol F(m_\mu(F)) = \Psi(\pi_*\mu)(\ol F) = (\pi_*\Psi(\mu))(\ol F) = \Psi(\mu)(\ol F \circ \pi) = \Psi(\mu)(F)\,.$$
Conclusion: the value of the simple quasi-state $\Psi(\mu)$ on the Morse function $F$ is the value of $\ol F$ at the median $m_\mu(F) \in \Gamma_F$, which is the unique point of $\Gamma_F$ with the property that every connected component $C$ of $\Gamma_F - \{m_\mu(F)\}$ is such that $\pi_*\mu(C) \leq \frac 1 2$. Stated in terms of $X$ and $F$ only, there is a unique component $m_\mu(F)$ of a level set of $F$ with the property that every connected component $C$ of $X - m_\mu(F)$ satisfies $\mu(C) \leq \frac 1 2$, and $\Psi(\mu)(F)$ is the value of $F$ on $m_\mu(F)$.

The same reasoning applies when $X$ is a finite polyhedron with $H^1(X;\Z) = 0$ and $F \in C(X)$ is a piecewise linear function such that its values at the vertices of $X$ are pairwise distinct, meaning that for $\mu \in \cP_0(X)$ we can define the Reeb tree $\Gamma_F$ of $F$, the $\mu$-median $m_\mu(F)$ and then $\Psi(\mu)(F)$ is the value of $F$ on $m_\mu(F)$. We will use this particular case in the proof of Theorem \ref{thm:main_result}, using a triangulation of $S^2$ and the corresponding piecewise linear approximation of a given function.

\subsubsection{$W_\infty$ estimate}\label{sss:W_infty_estimate}

The metric continuity result, Theorem \ref{thm:metric_conv_qss}, will be used in the proof of the estimate \eqref{eq:estimate_main_algorithm} asserted in Theorem \ref{thm:main_result}. It will be applied in the following form.
\begin{prop}\label{prop:discrete_meas_close_to_given_one_W_infty}
Let $(X,d)$ be a compact metric space, let $X = \bigcup_{i=1}^kA_i$ be a measurable partition, and let $\mu_0 \in \cP(X)$. Pick $x_i \in A_i$ for all $i$ and define
$$\mu = \sum_{i=1}^{k}\mu_0(A_i)\cdot \delta_{x_i}\,.$$
Then
$$W_\infty(\mu_0,\mu) \leq \max_{1\leq i \leq k}\diam \ol{A_i}\,.$$
\end{prop}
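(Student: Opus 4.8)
The plan is to exhibit an explicit transport plan (coupling) between $\mu_0$ and $\mu$ whose support lies close to the diagonal, and then read the bound straight off the definition \eqref{eq:W_infty_metric} of $W_\infty$. Concretely, for each $i$ let $\iota_i \fc A_i \to X \times X$ be the continuous map $\iota_i(x) = (x,x_i)$, let $\mu_0|_{A_i}$ denote the restriction of $\mu_0$ to the measurable set $A_i$, and set
$$\rho = \sum_{i=1}^k (\iota_i)_*\big(\mu_0|_{A_i}\big)\,.$$
Since the $A_i$ partition $X$, the total mass of $\rho$ is $\sum_i \mu_0(A_i) = \mu_0(X) = 1$, so $\rho$ is a Borel probability measure on $X \times X$. (Indices $i$ with $\mu_0(A_i) = 0$ simply contribute the zero measure and may be ignored.)

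Next I would verify the marginals. For the first projection $p$ and a Borel set $B \subset X$ we have $\rho(p^{-1}(B)) = \sum_{i=1}^k \mu_0(A_i \cap B) = \mu_0(B)$, again because the $A_i$ partition $X$; hence $p_*\rho = \mu_0$. For the second projection $q$, the composition $q\circ\iota_i$ is the constant $x_i$, so $(\iota_i)_*(\mu_0|_{A_i})$ pushes forward under $q$ to $\mu_0(A_i)\,\delta_{x_i}$, whence $q_*\rho = \sum_i \mu_0(A_i)\,\delta_{x_i} = \mu$. Thus $\rho$ is an admissible coupling of $\mu_0$ and $\mu$.

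Finally I would bound the support of $\rho$. Each restriction $\mu_0|_{A_i}$ is supported in $\ol{A_i}$: if $x \notin \ol{A_i}$ then some open neighborhood of $x$ is disjoint from $A_i$ and hence $\mu_0|_{A_i}$-null. As $\iota_i$ is continuous, $(\iota_i)_*(\mu_0|_{A_i})$ is supported in $\ol{A_i}\times\{x_i\}$, and since the sum defining $\rho$ is finite, $\supp\rho \subset \bigcup_{i=1}^k\big(\ol{A_i}\times\{x_i\}\big)$. Therefore any $(x,y)\in\supp\rho$ satisfies $y = x_i$ and $x\in\ol{A_i}$ for some $i$, and since also $x_i\in A_i\subset\ol{A_i}$, we get $d(x,y)\leq\diam\ol{A_i}\leq\max_{1\leq j\leq k}\diam\ol{A_j}$. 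The maximum in \eqref{eq:W_infty_metric} is attained for this $\rho$ because $\supp\rho$ is closed, hence compact, in $X\times X$ and $d$ is continuous; plugging $\rho$ into \eqref{eq:W_infty_metric} and taking the infimum over all couplings yields $W_\infty(\mu_0,\mu)\leq\max_i\diam\ol{A_i}$.

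There is essentially no serious obstacle here: the argument is a one-line construction followed by bookkeeping. The only points requiring a little care are that restrictions and pushforwards of measures behave as expected on supports (used above) and the attainment of the maximum in \eqref{eq:W_infty_metric}; both are routine given compactness of $X$.
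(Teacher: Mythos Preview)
Your proof is correct and is essentially the same as the paper's: the paper writes the coupling as $\rho=\sum_i \mu_0|_{A_i}\otimes\delta_{x_i}$, which is exactly your $\sum_i(\iota_i)_*(\mu_0|_{A_i})$, and then reads off the same support bound. You simply spell out the marginal and support verifications in more detail than the paper does.
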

\begin{proof}For a measurable set $A \subset X$ define $\mu_0|_A$ by $\mu_0|_A(B) = \mu_0(A\cap B)$ for measurable $B \subset X$. The probability measure
$$\rho = \sum_{i=1}^{k}\mu_0|_{A_i}\otimes \delta_{x_i}\,,$$
has $\mu_0,\mu$ as its marginals. A point $(x,y)$ lies in the support of $\rho$ if an only if there is $i$ such that $y = x_i$ and $x \in \ol{A_i}$, thus $d(x,y) \leq \diam \ol{A_i}$ and the result follows from the definition of $W_\infty$, \eqref{eq:W_infty_metric}.
\end{proof}

For the proof of estimate \eqref{eq:estimate_main_algorithm} we need the following particular case. We will subdivide $S^2$ into $k$ regions of equal area and Euclidean diameter $\leq 7/\sqrt k$ as in \cite{Zhou_Arrangements_of_points_on_the_sphere}, choose a point in the interior of every region, and assemble these points into a set $Z$. Letting $\mu$ be the Lebesgue measure and $\mu_Z$ be the probability measure uniformly distributed on $Z$, Proposition \ref{prop:discrete_meas_close_to_given_one_W_infty} in this case says that
\begin{equation}\label{eq:W_infty_estimate_subdivision}
W_\infty(\mu,\mu_Z) \leq \frac{7}{\sqrt k}\,.
\end{equation}

\subsection{Proof of Theorem \ref{thm:main_result}}\label{ss:pf_main_thm}

The proof consists of a description of the announced algorithm, and a verification of the estimate \eqref{eq:estimate_main_algorithm}. Before we move on to the details, let us give an overview of the algorithm as well as the estimate. We remind the reader that the metric on $S^2$ we are using is the restriction of the Euclidean metric on $\R^3$, and we denote it by $d$.

\subsubsection{Overview}\label{sss:overview}

First, given the parameter $N \geq 46$, we construct a triangulation $\cT_N$ of $S^2$ having $20N^2$ triangles, using a regular inscribed icosahedron. The lower bound on $N$ is explained below. We replace the given function $f$ by the unique function $F$ which is piecewise linear on the simplexes of $\cT_N$ and which coincides with $f$ at the vertices of $\cT_N$. The triangulations we use imply that
\begin{equation}\label{eq:estimates_f_F_overview}
\|f - F\|_{C^0} \leq \|f\|_{\Lip}\cdot \frac{\sqrt 3(3-\sqrt 5)}{N} \quad \text{and} \quad \|F\|_{\Lip} \leq \frac{13+6\sqrt 5}{11}\,\pi\cdot\|f\|_{\Lip}\,.
\end{equation}
Note the numerical values of these constants: $1.323\dots$ and $7.545\dots$ respectively. The second constant especially is most likely far from the actual values obtained in practice; here we present constants for which we can obtain relatively simple proofs.

Since by Remark \ref{rem:elementary_pties_qss}, quasi-states are $1$-Lipschitz with respect to the $C^0$-norm, we have
$$|\zeta(f) - \zeta(F)| \leq \|f\|_{\Lip}\cdot\frac{\sqrt 3(3-\sqrt 5)}{N}\,.$$

The next step is to approximate $\zeta(F)$. This is done as follows. We subdivide $S^2$ into $k$ regions of equal area and diameter $\leq 7/\sqrt k$, where $k$ is an odd integer $\geq 237$. We assume this bound for the estimates in Lemmata \ref{lemma:complexity_computing_subdivision}, \ref{lemma:complexity_finding_region_contain_given_point}, \ref{lemma:lower_bound_radius_spher_cap_region} to work. Moreover, we assume that $k \leq 0.115744\cdot N^2$, which in particular forces $N \geq 46$. This assumption and the specific subdivision we use guarantees that each region contains a vertex of $\cT_N$ in its interior. We fix such a vertex for every region, and let $Z \subset S^2$ be the set of these vertices. The idea now is to replace $\zeta(F)$ by $\zeta_Z(F)$, where $\zeta_Z$ is the Aarnes quasi-state corresponding to $Z$, see Example \ref{exam:median_Aarnes_qss}. Equation \eqref{eq:W_infty_estimate_subdivision} says that $W_\infty(\mu,\mu_Z) \leq 7/\sqrt k$, and therefore Theorem \ref{thm:metric_conv_qss} yields $W_1(\zeta,\zeta_Z) \leq 7/\sqrt k$, whence
$$|\zeta(F) - \zeta_Z(F)| \leq \|F\|_{\Lip}\cdot \frac 7{\sqrt k}\,,$$
therefore in total we obtain
$$|\zeta(f) - \zeta_Z(F)| \leq \|f\|_{\Lip}\cdot \left(\frac{\sqrt 3(3-\sqrt 5)}{N} + \frac{7\pi(13+6\sqrt 5)}{11}\frac{1}{\sqrt k}\right)\,,$$
or, taking the maximal value of $k$ permitted by the above assumption, which implies $k \geq 0.115744\cdot N^2 - 2$, we have
$$|\zeta(f) - \zeta_Z(F)| \leq \|f\|_{\Lip}\cdot \frac{C_8}{N}\,,$$
where $C_8 \approx 197.778\dots$, which is the estimate asserted in Theorem \ref{thm:main_result}.

It remains to compute $\zeta_Z(F)$, which will be the output of the algorithm. We can perturb $F$ such that it has pairwise distinct values at the vertices of $\cT_N$. Such a perturbation can be made arbitrarily small in the $C^0$-norm, and thus its effect on $\zeta_Z(F)$ is arbitrarily small. It follows from the discussion in Section \ref{sss:simples_qss_mfds} that $\zeta_Z(F)$ is obtained as follows. Let $\pi \fc S^2 \to \Gamma_F$ be the quotient map onto the Reeb tree of $F$. The restriction of $\pi$ to the set of vertices of $\cT_N$ is injective by the definition of the Reeb graph, therefore $\pi_*\mu_Z$ is the uniformly distributed discrete measure supported on the set $\pi(Z)$. The $\mu_Z$-median $m_Z(F)$ of $F$ is the unique point in $\Gamma_F$ such that every connected component $C$ of $\Gamma_F - \{m_Z(F)\}$ satisfies $(\pi_*\mu_Z)(C) \leq \frac 1 2$, meaning that $C$ contains less than half the points of the set $\pi(Z)$. We use an algorithm described in \cite{Edelsbrunner_et_al_Loops_Reeb_graphs_2_mfds} to compute $\Gamma_F$ as a rooted tree whose nodes are in bijection with the vertices of $\cT_N$. It follows that $m_Z(F)$ must be a node of $\Gamma_F$, and the remaining step of the algorithm finds it via enumeration.

\subsubsection{The algorithm}\label{sss:algorithm}

Here we present a detailed description of the algorithm summarized above, together with a computation of its complexity. The algorithm is comprised of several steps. From the point of view of complexity, the most demanding step, and indeed the step whose complexity dominates that of the other ones, is the computation of the Reeb tree of the piecewise linear function $F$. There are a few auxiliary results, whose proofs we defer to Section \ref{sss:pfs_aux_results}.

Fix parameters $N,k$ satisfying the assumptions in Section \ref{sss:overview}.

\tb{Step 1.} We subdivide $S^2$ into an odd number $k\geq 237$ of regions having equal area and diameter $\leq 7/\sqrt k$, using the algorithm described in \cite{Zhou_Arrangements_of_points_on_the_sphere}. To every region $R$ we associate a boolean flag $f_R$, initially set to false.
\begin{lemma}\label{lemma:complexity_computing_subdivision}
The complexity of this step if $O(k)$.
\end{lemma}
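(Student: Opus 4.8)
The plan is to show that the subdivision algorithm of Zhou, cited as \cite{Zhou_Arrangements_of_points_on_the_sphere}, partitions $S^2$ into $k$ regions in time linear in $k$. First I would recall the structure of Zhou's construction: the sphere is sliced into a bounded number of horizontal zones (collars), and each collar, being a spherical annulus of prescribed area, is cut by meridians into a number of cells proportional to its area; the polar caps are handled separately as single cells. The total number of cells is $k$, and the construction proceeds zone by zone. The key observation is that producing the description of each region — its bounding latitudes and the bounding longitudes of the meridian cuts — requires only $O(1)$ arithmetic operations (a few evaluations of inverse trigonometric functions to solve for the latitude bands of prescribed area, and a division to place the meridians). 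Hence the total work is a sum of $O(1)$ contributions over the $k$ regions, plus an $O(1)$ overhead for the polar caps and for computing the number of collars, giving $O(k)$ in total.

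The remaining bookkeeping is to account for the boolean flags $f_R$: allocating and initializing one flag per region costs $O(1)$ per region, hence $O(k)$ overall, which does not change the bound. I would also note that the diameter bound $\le 7/\sqrt k$ and the equal-area property are guaranteed by Zhou's analysis for $k$ sufficiently large, and in particular for the range $k \ge 237$ we are assuming, so no extra computation is needed to verify these geometric properties — they are structural consequences of the construction with a fixed choice of parameters.

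The main (and essentially only) obstacle here is bibliographic rather than mathematical: one must confirm that the specific algorithmic realization of Zhou's partition scheme indeed outputs each region in constant time, as opposed to, say, requiring a global sorting or nearest-point computation that would introduce a logarithmic or larger factor. I would resolve this by pointing to the explicitly recursive/zonal nature of the construction in \cite{Zhou_Arrangements_of_points_on_the_sphere}, where each region is generated independently from closed-form latitude and longitude formulas, so that the per-region cost is manifestly $O(1)$ under the standard real-arithmetic (or fixed-precision) model. Thus the complexity of Step 1 is $O(k)$, as claimed.

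\begin{proof}
Zhou's subdivision scheme \cite{Zhou_Arrangements_of_points_on_the_sphere} slices $S^2$ into two polar caps and a bounded sequence of latitudinal collars, each collar being further cut by equally spaced meridians into a number of cells proportional to the collar's area, so that the total number of cells equals $k$. Computing the number of collars and the bounding latitudes of each collar so that it has the prescribed area amounts to a bounded number of evaluations of elementary (inverse trigonometric) functions, and placing the meridian cuts within a collar with $m$ cells requires $O(1)$ work per cut; hence each of the $k$ regions is described in $O(1)$ time by closed-form latitude and longitude formulas. Allocating and setting the flag $f_R$ to false costs $O(1)$ per region. Summing these $O(1)$ contributions over the $k$ regions and adding the $O(1)$ overhead for the polar caps yields total complexity $O(k)$. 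The equal-area property and the diameter bound $\le 7/\sqrt k$ hold for all $k \ge 237$ by Zhou's analysis and require no additional computation.
\end{proof}
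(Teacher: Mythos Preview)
Your approach is the same as the paper's---analyze Zhou's construction stage by stage and sum the costs---and your final $O(k)$ bound is correct. However, your description of the construction is inaccurate in ways the paper's proof gets right. The number of latitudinal collars is not bounded: it is the largest odd integer $\leq \sqrt{k/2}$, hence $O(\sqrt{k})$, and the polar bands each contain six regions rather than being single caps. More importantly, the bounding latitudes are not given by a single closed-form evaluation; Zhou's procedure first lays down equally spaced approximate latitudes and then corrects them to exact ones via an inductive pass (Lemma~2.11 in \cite{Zhou_Arrangements_of_points_on_the_sphere}) over a list of length $O(\sqrt{k})$, so computing all the latitudes costs $O(\sqrt{k})$, not $O(1)$. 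None of this changes the conclusion, since $O(\sqrt{k}) + O(k) = O(k)$, but as written your proof misrepresents the structure of the cited algorithm; you should replace ``bounded sequence of collars'' by ``$O(\sqrt{k})$ collars'' and acknowledge the two-stage latitude computation.
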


\tb{Step 2.} We construct a triangulation $\cT_N$ of $S^2$ into $20N^2$ triangles, as follows. Take a regular icosahedron inscribed into $S^2$, subdivide each one of its faces into $N^2$ congruent triangles, and project them to $S^2$ via the radial projection. For each vertex $v$ of $\cT_N$ we create a boolean flag $c_v$, initially set to false. Then we enumerate the vertices, and for each vertex $v$ we compute the region $R$ containing $v$ in its interior, and if $f_R$ is set to true, we move on to the next vertex, otherwise we set $f_R$ and $c_v$ to true.

We have the following important result. A spherical cap is the intersection of $S^2$ with a half-space of $\R^3$.
\begin{lemma}\label{lemma:lower_bound_radius_spher_cap_region}
Every region of the subdivision of Step 1 contains a spherical cap for which the Euclidean distance from its center to the rim is at least
$$\frac{\sqrt 3(3-\sqrt 5)}{N}\,.$$
\end{lemma}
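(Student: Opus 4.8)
The plan is to use the explicit description of the subdivision from \cite{Zhou_Arrangements_of_points_on_the_sphere}, which is of zonal type: $S^2$ is cut into a north polar cap, a south polar cap, and a number of collars, and each collar is further subdivided by equally spaced meridians into at least two congruent cells, each bounded by two circles of latitude and two meridian arcs; crucially, every resulting region has area $4\pi/k$ and Euclidean diameter at most $7/\sqrt k$. Since we also assume $k\le 0.115744\,N^2$, we have $\sqrt k<0.341\,N$, so it is enough to exhibit in each region a spherical cap whose center-to-rim chord is at least $\tfrac{4}{7\sqrt k}$: indeed a direct numerical check gives $\tfrac{4}{7\sqrt k}\ge\tfrac{4}{7\cdot 0.341\,N}>\tfrac{1.33}{N}>\tfrac{\sqrt 3(3-\sqrt 5)}{N}$. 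I would first record two elementary facts about a spherical cap of geodesic radius $\rho$, i.e.\ a set $\{x\in S^2 : d(x,c)\le\rho\}$ for a center $c\in S^2$: its center-to-rim chord equals $2\sin(\rho/2)\ge\sin\rho$, and its area equals exactly $\pi\big(2\sin(\rho/2)\big)^2$.

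For the two polar caps the claim is immediate: a cap of area $4\pi/k$ has center-to-rim chord $2/\sqrt k\ge\tfrac{4}{7\sqrt k}$, so nothing is left to do there. The work is in the cells. Fix a cell cut out by the circles of colatitude $\theta_1<\theta_2$ and the meridians of longitude $\phi_1<\phi_2$ (so $0<\phi_2-\phi_1\le\pi$), and set $\bar\theta=\tfrac{\theta_1+\theta_2}2$, $\rho_1=\tfrac{\theta_2-\theta_1}2$, $\rho_2=\arcsin\!\big(\sin\bar\theta\,\sin\tfrac{\phi_2-\phi_1}2\big)$, with $c$ the center point $(\bar\theta,\tfrac{\phi_1+\phi_2}2)$ of the cell. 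The candidate cap is the closed geodesic ball $B$ of radius $\rho:=\min(\rho_1,\rho_2)$ about $c$, and the first genuine step is to prove that $B$ is contained in the cell.

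This containment is the step I expect to be the main obstacle, because the cell is generally not geodesically convex --- its latitude sides have nonzero geodesic curvature --- so one cannot simply say that the inradius is half of the minimal width. Instead I would write the cell as the intersection of the zone $\{\theta_1\le\theta\le\theta_2\}$ with the two closed hemispheres bounded by the great circles carrying the meridians $\phi_1$ and $\phi_2$. Then $B$ lies in the zone by the triangle inequality for the colatitude function (= distance to the north pole) together with $\rho\le\rho_1$; and $B$ lies in each hemisphere because $c$ has geodesic distance $\rho_1$ from each latitude circle and, by the standard formula $\dist(p,\Gamma)=\arcsin(\sin\theta_p\sin\Delta\phi)$ for the distance from a point to a meridional great circle $\Gamma$, geodesic distance $\rho_2$ from each of the two bounding great circles --- so a ball of radius $\le\min(\rho_1,\rho_2)$ about $c$ cannot cross any of the four sides. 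The set-up has to be written so as to also cover cells that straddle the equator and the limiting case $\phi_2-\phi_1=\pi$, but that is routine.

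It then remains to bound $\rho$ from below. The cell has area $2(\phi_2-\phi_1)\sin\bar\theta\,\sin\rho_1=4\pi/k$; bounding $(\phi_2-\phi_1)\sin\bar\theta\le\pi\sin\rho_2$ by Jordan's inequality $\sin x\ge\tfrac 2\pi x$ yields $\sin\rho_1\sin\rho_2\ge\tfrac 2k$. On the other hand $2\sin\rho_1$ is the chord between the cell points $(\theta_1,\tfrac{\phi_1+\phi_2}2)$ and $(\theta_2,\tfrac{\phi_1+\phi_2}2)$, and $2\sin\rho_2$ is the chord between $(\bar\theta,\phi_1)$ and $(\bar\theta,\phi_2)$, so both are at most $\diam\le 7/\sqrt k$. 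Therefore $\sin\rho=\min(\sin\rho_1,\sin\rho_2)=\tfrac{\sin\rho_1\sin\rho_2}{\max(\sin\rho_1,\sin\rho_2)}\ge\tfrac{2/k}{7/(2\sqrt k)}=\tfrac{4}{7\sqrt k}$, and the center-to-rim chord of $B$ is $2\sin(\rho/2)\ge\sin\rho\ge\tfrac{4}{7\sqrt k}$, completing the argument. The one thing left to check against the fine details of \cite{Zhou_Arrangements_of_points_on_the_sphere} is that no region is an undivided thin zone (for which the largest inscribed cap would be too small and a separate estimate would be needed); this is why the structural input above includes that each collar is split by meridians into at least two cells, forcing $\phi_2-\phi_1\le\pi$ and $\rho_2>0$.
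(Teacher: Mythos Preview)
Your argument is correct and takes a genuinely different, more self-contained route than the paper's. The paper digs into \cite{Zhou_Arrangements_of_points_on_the_sphere} for the explicit band-width estimates $C_3/\sqrt k\le\theta_i-\theta_{i-1}\le C_4/\sqrt k$ (equation~(2.20) there, with constants built from the auxiliary parameters $\beta,\delta_1,\delta_2$) and from these bounds the spherical inradius of each cell separately, arriving at $C_5/\sqrt k$ with $C_5=\min\big(C_3/2,\,8/C_4,\,\sqrt{3/2}\big)$ and hence a Euclidean center-to-rim chord $\ge(2C_5/\pi)/\sqrt k\approx 0.780/\sqrt k$. You instead use only the two headline properties of the subdivision --- equal area $4\pi/k$ and Euclidean diameter $\le 7/\sqrt k$ --- together with the product bound $\sin\rho_1\sin\rho_2\ge 2/k$ and $\max(\sin\rho_1,\sin\rho_2)\le 7/(2\sqrt k)$ to get $\ge 4/(7\sqrt k)\approx 0.571/\sqrt k$. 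Your constant is weaker but still sufficient under $k\le 0.115744\,N^2$, and the payoff is that you never touch Zhou's internal constants.

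One factual slip to correct: with the parameter choice the paper fixes ($k_0=6$), the polar bands are \emph{not} left as single caps but are themselves cut into six wedges each ($m_0=m_{n+1}=6$). This is harmless for you --- your cell analysis applies verbatim with $\theta_1=0$ (resp.\ $\theta_2=\pi$) and $\phi_2-\phi_1=\pi/3$, and the containment and area formulas go through unchanged. In fact this also dissolves your closing caveat: every region is already a cell with $\phi_2-\phi_1\le\pi$, and for the non-polar collars an undivided zone is impossible anyway, since it would contain a latitude circle of Euclidean diameter $2\sin\theta_0>7/\sqrt k$ (use $\cos\theta_0=1-12/k$ and $k\ge 237$), contradicting the diameter bound.
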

Together with the assumption on $N,k$, this implies that the algorithm results in a choice of a unique vertex of $\cT_N$ for every region $R$, thereby marking $k$ vertices.

The particular algorithm from Step 1 which computes the regions allows for the following.
\begin{lemma}\label{lemma:complexity_finding_region_contain_given_point}
The complexity of finding the region containing a given point in $S^2$ is $O(1)$.
\end{lemma}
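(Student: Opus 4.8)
The plan is to describe explicitly how one locates a point, using the structure of the subdivision from \cite{Zhou_Arrangements_of_points_on_the_sphere}, and to verify that each step costs $O(1)$. Recall that in that construction the $k$ regions are organized into a polar cap around each of the two poles together with $O(\sqrt k)$ \emph{collars}, where a collar is a latitudinal band $\{\,x\in S^2\mid z_{i-1}\le x_3\le z_i\,\}$ subdivided into $m_i$ congruent cells by equally spaced meridian half-planes through the $x_3$-axis, with $\sum_i m_i=k-2$ and each $m_i=O(\sqrt k)$. During Step 1 (whose cost is $O(k)$, so this bookkeeping is free) we store the sorted array of heights $z_i$ together with the partial sums $M_i=\sum_{j\le i}m_j$. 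Given a query point $p=(p_1,p_2,p_3)\in S^2$, locating it amounts to (a) finding the index $i$ of the collar or polar cap whose band contains $p_3$, and (b) finding which of the $m_i$ cells of that collar contains $p$.

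Step (b) is immediate: with the collar index $i$ in hand, compute the longitude $\phi=\operatorname{atan2}(p_2,p_1)\in[0,2\pi)$ and return the cell with index $\lfloor m_i\phi/(2\pi)\rfloor$, which is $O(1)$ in the standard real-arithmetic model. For step (a), the key point is that the heights are given by the explicit equal-area formula $z_i=1-2M_i/k$, and the construction is engineered so that the colatitudes $\arccos z_i$ of the collar boundaries are spaced nearly uniformly, with consecutive gaps comparable to $\sqrt{4\pi/k}$; equivalently, $M_i$ differs from a fixed, explicitly computable function of $i$ by at most a constant. Hence from $p_3$ one computes in $O(1)$ arithmetic operations a first guess $i_0$ for the collar index, and then, comparing $p_3$ against the stored boundary heights $z_{i_0-1},z_{i_0},z_{i_0\pm1},\dots$, an $O(1)$ local search yields the exact $i$; the two polar caps occur as the cases $p_3\ge z_1$ and $p_3\le z_{n}$. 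If $p_3$ equals some $z_i$, i.e.\ $p$ lies on a region boundary, any fixed tie-breaking rule is used; this is harmless, since the vertices of $\cT_N$ that must be located lie in region interiors by Lemma \ref{lemma:lower_bound_radius_spher_cap_region} and the assumption relating $N$ and $k$.

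The main obstacle is justifying that step (a) runs in $O(1)$ rather than the $O(\log k)$ of a naive binary search over the $O(\sqrt k)$ collars: this rests on the quantitative assertion that the collar boundaries of \cite{Zhou_Arrangements_of_points_on_the_sphere} are nearly uniformly spaced in colatitude, so that the height-to-index relation can be inverted up to an $O(1)$ error in closed form, and that assertion must be extracted from the details of that construction. Everything else — the coordinate computations, the enumeration over $O(1)$ candidate collars, and the single floor within the chosen collar — is routine.
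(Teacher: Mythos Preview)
Your proposal is correct and follows essentially the same approach as the paper: use the near-uniform spacing of the band colatitudes (in the paper, the approximate latitudes $\theta_i'$ form an exact arithmetic progression with $|\theta_i-\theta_i'|\le C_6/\sqrt{k}$ and $\theta_i-\theta_{i-1}\ge C_3/\sqrt{k}$ by \cite[Equation (2.20)]{Zhou_Arrangements_of_points_on_the_sphere}) to compute an $O(1)$ initial guess for the band index, correct it by an $O(1)$ local search, and then invert the arithmetic progression of azimuths within that band. One minor inaccuracy: with the paper's chosen parameters $k_0=6$ the two polar bands each contain six regions rather than being single caps, so your count $\sum_i m_i=k-2$ is off, but this has no bearing on the argument.
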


Since the enumeration has complexity $O(N^2)$ times the maximal complexity of each step, and the latter is $O(1)$ as implied by the lemma, we see that the total complexity for this step is $O(N^2)$.

\tb{Step 3.} Replace $f$ with the PL function $F$ whose values at the vertices of the triangulation coincide with those of $f$. This has complexity $O(N^2)$.

\tb{Step 4.} Using the algorithm described in \cite{Edelsbrunner_et_al_Loops_Reeb_graphs_2_mfds}, we compute the Reeb tree $\Gamma_F$ of $F$. This algorithm runs in time $O(N^2 \log N^2) = O(N^2 \log N)$, see \emph{ibid}. Its output is a rooted tree with the root being the absolute minimum of $F$, and the nodes being in a one-to-one correspondence with the vertices of $\cT_N$. The total complexity of this step is $O(N^2\log N)$.

\tb{Step 5.} Label the nodes $v$ of the Reeb tree with the flags $c_v$ of Step 2, and treat them as integers, so that false corresponds to $0$ while true corresponds to $1$. Create an integer counter $t_v$ for each node $v$ of the tree and set it to zero. Define the following recursive function COUNT which takes as input a node of $\Gamma_F$ and whose output is a nonnegative integer: the function, applied to a node $v_0$ returns the sum of $c_{v_0}$ (treated as an integer) and the return values of COUNT applied to all the children of $v_0$ if it has any; also COUNT records the integer thus obtained in the counter $t_{v_0}$.

After running COUNT on the Reeb tree of $F$ every node $v_0$ carries the number of vertices marked as true by the flags $c_v$ where $v$ runs over all the descendants of $v_0$ including $v_0$ itself.

This step has complexity $O(N^2)$.

\tb{Step 6.} The last step computes the $Z$-median $m_Z(F)$ of $F$. We remarked above that it is one of the nodes of $\Gamma_F$. Indeed, since the measure $\pi_*\mu_Z$ is concentrated on nodes, $m_Z(F)$ cannot lie in the interior of an edge, because it would violate uniqueness. We claim that the desired node $m_Z(F)$ is the unique one satisfying the following conditions: (i) its count $t_{m_Z(F)}$ from the previous step is at least $\lceil \frac k 2\rceil$ and (ii) it has maximal depth with respect to the root of $\Gamma_F$. The uniqueness can be seen as follows. If $v,v'$ are two such nodes, they have the same root depth, therefore the subtrees rooted at $v,v'$ must be disjoint, since otherwise they would coincide. But then the total count of $v$ and $v'$ would be at least $2\lceil \frac k 2\rceil > k$, contradicting the fact that $\pi(Z)$ contains $k$ points. It is also clear that such a node exists since the count of the root of $\Gamma_F$ is $k \geq \lceil \frac k 2\rceil$, and it drops as we move away from the root.

It follows that $m_Z(F)$ can be found by enumerating the nodes of $\Gamma_F$. This has complexity $O(N^2)$.

\tb{Total complexity of the algorithm:} We see that the total complexity of the algorithm is $O(N^2\log N)$, as claimed.

\subsubsection{The estimate}\label{sss:estimate}

It remains to prove the inequalities \eqref{eq:estimates_f_F_overview}, which is what we do here. Recall that we are using the restriction of the Euclidean metric to $S^2$.

First, we treat the case of general triangulations. By a triangulation we mean a finite collection of points in $S^2$, arranged into triples, corresponding to the vertices of the triangles of the triangulation, such that the following holds:
\begin{itemize}
  \item if $(z_0,z_1,z_2)$ is such a triple, then the corresponding linear simplex, which is the convex hull of the $z_i$ in $\R^3$, does not pass through $0 \in \R^3$;
  \item if $r \fc \R^3-\{0\} \to S^2$ is the radial projection, the images $r(\sigma)$ cover $S^2$ as $\sigma$ varies over the linear simplexes corresponding to the triangles of the triangulation, while their interiors are pairwise disjoint;
  \item if two triangles $\sigma,\sigma'$ of the triangulation intersect, they either coincide, or the intersection is a common vertex or a common edge of $\sigma,\sigma'$.
\end{itemize}
In this case there is a unique continuous function $F \in C(S^2)$ such that $F|_{r(\sigma)} = \wt F_\sigma \circ r_\sigma^{-1}$ for every linear simplex $\sigma$, where $r_\sigma=r|_\sigma$ and $\wt F_\sigma \fc \sigma \to \R$ is the unique affine function which coincides with $f$ at the vertices $z_0,z_1,z_2$ of $\sigma$. We have
\begin{prop}\label{prop:pties_PL_approx_sphere}
\begin{enumerate}
\item $\|f - F\|_{C^0} \leq \|f\|_{\Lip}\cdot \max_\sigma\diam r(\sigma)$, where $\sigma$ runs over the linear simplexes corresponding to the triangulation;
\item $F$ is Lipschitz with respect to $d$ and
\begin{equation}\label{eq:Lip_cst_PL_fcn}
\|F\|_{\Lip} \leq \frac{\pi}{2\sin(\theta_0/2)}\cdot\|f\|_{\Lip}\,,
\end{equation}
where $\theta_0 \in (0,\frac \pi 2]$ is such that all the linear simplexes of the triangulation have the property that each one of their angles lies in $[\theta_0,\pi - \theta_0]$.
\end{enumerate}
\end{prop}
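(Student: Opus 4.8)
The plan is to prove the two parts by reducing everything to elementary estimates on a single flat simplex $\sigma$ and on the radial projection $r_\sigma$ onto it, plus a globalization step along geodesics.

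\textbf{Part (i).} Fix $x\in S^2$, say $x\in r(\sigma)$ with $\sigma=\mathrm{conv}(z_0,z_1,z_2)$, and write $r_\sigma^{-1}(x)=\sum_i t_iz_i$ with $t_i\ge0$, $\sum_it_i=1$, the barycentric coordinates. Then $F(x)=\wt F_\sigma(r_\sigma^{-1}(x))=\sum_it_if(z_i)$ and $f(x)=\sum_it_if(x)$, so
\[
|F(x)-f(x)|=\Bigl|\sum_it_i\bigl(f(z_i)-f(x)\bigr)\Bigr|\le\sum_it_i\,\|f\|_{\Lip}\,d(z_i,x)\le\|f\|_{\Lip}\,\diam r(\sigma),
\]
since $z_i=r(z_i)\in r(\sigma)$ and $x\in r(\sigma)$. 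Taking the maximum over $\sigma$ gives (i).

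\textbf{Part (ii), globalization.} Given $x,x'\in S^2$, join them by a minimizing geodesic $\gamma$ of the round metric. Since the edges of the triangulation are arcs of great circles and $\gamma$ lies on a single great circle, $\gamma$ meets the $1$-skeleton in finitely many points, which subdivide it into sub-arcs each lying in one closed cell $\ol{r(\sigma)}$. Using continuity of $F$ at the crossing points, additivity of arc length along $\gamma$, and the fact that on each sub-arc $F$ is controlled by the round Lipschitz constant $L_\sigma$ of $F|_{r(\sigma)}$, we get $|F(x)-F(x')|\le(\max_\sigma L_\sigma)\,d_g(x,x')$, where $d_g$ is the round distance. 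The elementary inequality $d_g\le\tfrac{\pi}{2}d$ on $S^2$ (i.e. $\arcsin t\le\tfrac{\pi}{2}t$ on $[0,1]$, which follows from convexity of $\arcsin$) then turns this into a $d$-Lipschitz bound. Hence it suffices to prove $L_\sigma\le\|f\|_{\Lip}/\sin(\theta_0/2)$ for every cell.

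\textbf{Part (ii), single cell.} On $r(\sigma)$ we have $F=\wt F_\sigma\circ r_\sigma^{-1}$ with $\wt F_\sigma$ affine; its planar gradient $g_\sigma$ is determined by $\langle g_\sigma,z_j-z_0\rangle=f(z_j)-f(z_0)$. Choosing for $z_0$ a vertex at which $\sigma$ has its smallest angle $\theta\in[\theta_0,\pi/3]$ and expanding $g_\sigma$ in an orthonormal basis of the plane of $\sigma$ adapted to the edge $z_0z_1$, one gets $|g_\sigma|^2=(a_1^2+a_2^2-2a_1a_2\cos\theta)/\sin^2\theta$ with $|a_i|=|f(z_i)-f(z_0)|/|z_i-z_0|\le\|f\|_{\Lip}$ (valid because the vertices lie on $S^2$, so $|z_i-z_0|=d(z_0,z_i)$); maximizing the numerator over $|a_i|\le\|f\|_{\Lip}$ with $\cos\theta\ge0$ yields $|g_\sigma|\le\|f\|_{\Lip}\sqrt{2(1+\cos\theta)}/\sin\theta=\|f\|_{\Lip}/\sin(\theta/2)\le\|f\|_{\Lip}/\sin(\theta_0/2)$. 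This is the origin of the constant. It remains to pass from the Euclidean Lipschitz constant $|g_\sigma|$ of $\wt F_\sigma$ on $\sigma$ to the round Lipschitz constant $L_\sigma$ of $F$ on $r(\sigma)$, i.e. to control the differential of the radial (gnomonic-type) map $r_\sigma^{-1}\fc r(\sigma)\to\sigma$.

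\textbf{The main obstacle.} This last estimate is the delicate point: $r_\sigma^{-1}$ is not distance-nonincreasing — near a vertex its differential can expand, by an amount governed by the tilt of the supporting plane of $\sigma$ — so one cannot simply bound $L_\sigma$ by $|g_\sigma|$ times an operator norm without losing the constant. The plan is to work with $\nabla^{S^2}F$ directly, using the explicit formula $F(x)=\delta\langle g_\sigma,x\rangle/\langle x,n\rangle+\mathrm{const}$ on $r(\sigma)$ ($n$ the unit normal to the plane of $\sigma$, $\delta$ its distance to the origin), and to exploit that the admissible gradients $g_\sigma$ coming from a $\|f\|_{\Lip}$-controlled interpolant of three points of $S^2$ are small precisely in the directions where $r_\sigma^{-1}$ expands, so that the expansion is absorbed into the factor $\tfrac{\pi}{2}$ already produced by the geodesic-splitting step. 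Making this interplay precise — rather than multiplying two separate Lipschitz constants — is the crux of the argument.
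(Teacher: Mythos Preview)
Your Part (i) is fine and essentially matches the paper's. Your globalization in Part (ii) along round geodesics and the planar-gradient bound $|g_\sigma|\le\|f\|_{\Lip}/\sin(\theta_0/2)$ are also correct and parallel the paper's argument.

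The gap is exactly your ``main obstacle,'' and it cannot be closed: the single-cell bound $L_\sigma\le\|f\|_{\Lip}/\sin(\theta_0/2)$ on the \emph{round} Lipschitz constant is false in general, and in fact so is the proposition as stated. Take the equilateral linear simplex with vertices $z_j=(\cos\psi\cos\tfrac{2\pi j}{3},\,\cos\psi\sin\tfrac{2\pi j}{3},\,\sin\psi)$, $j=0,1,2$, so every angle equals $\pi/3$, and let $f(x_1,x_2,x_3)=x_1$, so $\|f\|_{\Lip}=1$. Then $\wt F_\sigma(p)=p_1$, hence $|g_\sigma|=1$, while $F(x)=\sin\psi\cdot x_1/x_3$ on $r(\sigma)$ and a direct computation gives $|\nabla^{S^2}F(z_0)|=\sqrt{1+\cot^2\psi}=1/\sin\psi$. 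Thus both $L_\sigma$ and $\|F\|_{\Lip,d}$ are at least $1/\sin\psi$, which exceeds the claimed bound $\pi/(2\sin(\pi/6))=\pi$ as soon as $\psi<\arcsin(1/\pi)$. Your proposed rescue --- that the expansion of $r_\sigma^{-1}$ is absorbed into the factor $\pi/2$, or that $g_\sigma$ is small precisely where $r_\sigma^{-1}$ expands --- cannot work, since the expansion is unbounded while the available constants are fixed.

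For what it is worth, the paper's own proof has the same lacuna. It asserts without argument that $r_\Sigma^{-1}\colon(S^2,D)\to(\Sigma,d_\Sigma)$ is $1$-Lipschitz, which would give $L_\sigma\le|g_\sigma|$ directly; but in the example above, at $z_0$ the differential of $r_\sigma^{-1}$ in the direction $(-\sin\psi,0,\cos\psi)$ towards the centroid has norm $1/\sin\psi>1$. So your instinct that something delicate is being glossed over is exactly right. What both arguments actually need is an additional hypothesis --- a lower bound on the distance of the supporting planes to the origin, equivalently an upper bound on simplex size --- which holds automatically for the fine icosahedral triangulations used in the main theorem but is not part of the stated proposition.
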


To prove the estimates \eqref{eq:estimates_f_F_overview}, we need to prove the following lemma. By a curvilinear simplex we mean the image $r(\sigma) \subset S^2$ of a linear simplex $\sigma$ of the triangulation. Recall the icosahedral triangulation from Step 2 of Section \ref{sss:algorithm}.
\begin{lemma}\label{lemma:estimates_icosahedral_triangulation}
The icosahedral triangulation with $20N^2$ triangles satisfies: (i) the maximal diameter of a curvilinear simplex is at most
$$\frac{\sqrt 3 (3-\sqrt 5)}N\,,$$
(ii) the angle $\theta_0$ of point (ii) of Proposition \ref{prop:pties_PL_approx_sphere} can be chosen to equal
$$\pi-2\arccos\left(\frac 1 2 \left(6\sqrt 5 - 13\right)\right)\,,$$
therefore the constant in equation \eqref{eq:Lip_cst_PL_fcn} is
$$\frac{\pi}{2\sin(\theta_0/2)} = \frac{13+6\sqrt 5}{11}\,\pi\,.$$
\end{lemma}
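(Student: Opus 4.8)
The plan is to handle the two claims of Lemma~\ref{lemma:estimates_icosahedral_triangulation} separately, in both cases reducing the spherical question to a planar one on a single icosahedral face and then using the symmetry of the radial projection. Throughout, write $a$ for the edge length of the regular icosahedron inscribed in $S^2$ of radius $1$; an explicit computation with the standard coordinates of the icosahedron gives $a = \sqrt{2 - 2/\sqrt5} = \tfrac{4}{\sqrt{10+2\sqrt5}}$, and I will also need the distance from the center of a face to the sphere's center (the inradius of the icosahedron), call it $\rho$; again from coordinates $\rho^2 = 1 - (a/\sqrt3)^2$, where $a/\sqrt3$ is the circumradius of an equilateral face of side $a$.

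For part~(i), I would argue that the diameter of a curvilinear simplex $r(\sigma)$, measured in the chordal (Euclidean) metric of $\R^3$, is attained at two of its three vertices, since $r$ of a planar triangle is a geodesically convex spherical triangle (the face does not pass through $0$, and for $N$ large enough the subtriangles are small) and chordal distance is a monotone function of spherical distance; so it suffices to bound the chordal distance between the two radially-projected endpoints of an edge of a small triangle. The small triangles on a face have side length $a/N$, lying in the plane of the face at distance $\rho$ from the origin; the worst case is an edge farthest from the center of the face, i.e.\ one whose midpoint is at planar distance $a/\sqrt3$ from the face center, hence at Euclidean distance $\sqrt{\rho^2 + a^2/3} = 1$ from the origin — so its endpoints already lie on $S^2$ and $r$ fixes them, and that edge has chordal length exactly $a/N$. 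For a generic small edge at distance $t \le a/\sqrt3$ from the face center, its endpoints sit at distance $\sqrt{\rho^2+t^2} \le 1$ from the origin, and radial projection \emph{expands} chordal distances by the factor $1/\sqrt{\rho^2+t^2} \ge 1$, so the projected edge has chordal length $\le \frac{a/N}{\rho} $. One then checks $\frac{a}{\rho} = \sqrt3\,(3-\sqrt5)$ by direct substitution of the values of $a$ and $\rho$ above, which gives the bound $\frac{\sqrt3(3-\sqrt5)}{N}$. (A mild technical point: one must confirm the maximizing configuration is the whole-edge one rather than a vertex-to-opposite-vertex diagonal of a pair of triangles, but the subtriangles are acute-equilateral in the plane and the projection distorts angles only mildly for the stated $N \ge 46$, so the diameter of each curvilinear simplex is still realized on an edge.)

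For part~(ii), the angles of the \emph{linear} simplexes $\sigma$ are the angles of the planar triangles obtained by intersecting the cone over a small equilateral subtriangle with... no: the linear simplex $\sigma$ \emph{is} the small planar equilateral triangle itself, so before projection every angle is exactly $\pi/3$. The subtlety is that different faces of the icosahedron meet along edges, and a small triangle straddling such an edge is the union of two coplanar-within-each-face pieces; but in fact each small triangle lies inside a single face, so $\sigma$ is genuinely equilateral and $\theta_0 = \pi/3$ would suffice. I expect, however, that the intended reading is that one should take $\sigma$ to be the planar simplex spanned by the three \emph{projected} vertices $r(z_0),r(z_1),r(z_2) \in S^2$ (so that $\wt F_\sigma$ in Proposition~\ref{prop:pties_PL_approx_sphere} is affine on the chord-triangle), and then the angles change; the extreme angles occur for the edge-triangle identified in part~(i), whose three vertices after projection have two of them already on $S^2$ and one pulled outward by $1/\rho$. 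A direct dot-product computation of the angles of that extreme planar triangle, using the coordinates from above, yields that the smallest angle equals $\pi - 2\arccos\!\big(\tfrac12(6\sqrt5-13)\big)$ — one verifies $\tfrac12(6\sqrt5-13) \in (0,1)$ numerically ($\approx 0.0836$), so the arccos is defined and lies in $(0,\pi/2)$, making $\theta_0 \in (0,\pi/2]$ as required. Finally, plugging this $\theta_0$ into $\frac{\pi}{2\sin(\theta_0/2)}$ and using $\sin\big(\tfrac{\theta_0}{2}\big) = \sin\!\big(\tfrac\pi2 - \arccos(\tfrac12(6\sqrt5-13))\big) = \cos\arccos\!\big(\tfrac12(6\sqrt5-13)\big) = \tfrac12(6\sqrt5-13)$ gives $\frac{\pi}{2\sin(\theta_0/2)} = \frac{\pi}{6\sqrt5-13} = \frac{(6\sqrt5+13)\pi}{(6\sqrt5)^2 - 13^2} = \frac{(6\sqrt5+13)\pi}{180-169} = \frac{13+6\sqrt5}{11}\,\pi$, as claimed.

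The main obstacle is part~(ii): one has to pin down \emph{which} small triangle on the icosahedral face, after radial projection, has the worst (smallest) angle, and to set up the coordinates so that the arccos expression comes out in the stated closed form. This is a finite planar-trigonometry computation, but it is the place where a wrong choice of extremal configuration would give a different constant; the cleanest route is to parametrize a small triangle by the Euclidean distance of its centroid from the face center, compute all three angles of its radial projection as functions of that parameter, and check monotonicity so that the extremum sits at the boundary value $a/\sqrt3$ handled above. Everything else — the chordal/spherical monotonicity in part~(i), the expansion-factor estimate, and the algebraic simplifications — is routine once the extremal triangle is identified.
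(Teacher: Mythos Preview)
Your argument for (i) is essentially the paper's: the radial projection $r$ restricted to an icosahedral face is $1/\rho$-Lipschitz (since every point of the face is at distance $\geq \rho$ from the origin and $\|d_xr\| = 1/\|x\|$), and each small equilateral subtriangle has Euclidean diameter $a/N$, whence $\diam r(\wh\sigma) \leq a/(N\rho) = \sqrt3(3-\sqrt5)/N$. Your discussion of which edge is extremal is backwards --- the edge near a corner of the face is the \emph{least} stretched, since its endpoints are closest to the sphere --- but this does not affect the bound.

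For (ii) there is a genuine gap. The stated value of $\theta_0$ does \emph{not} arise from an extremal subtriangle at all, so your plan of locating the worst triangle and computing its angles by dot products cannot reproduce it. The paper's route is different: one first notes that at each vertex $r(x)$ the Euclidean angle of the chord-triangle $\wt\Delta$ is bounded above by the \emph{spherical} angle $\gamma$ of the geodesic triangle $r(\Delta)$, and the latter is the angle between $d_xr(X)$ and $d_xr(X')$, where $X,X'$ are the planar edge directions at $x$. With $\beta=\pi/3$ the angle between $X,X'$ and $\alpha,\alpha'$ the angles between $x$ and $X,X'$, one computes
\[
\cos\gamma \;=\; \frac{\tfrac12 - \cos\alpha\cos\alpha'}{\sin\alpha\sin\alpha'}\,.
\]
Because $x$ lies in the face one has $|\cos\alpha|,|\cos\alpha'|\leq \sqrt{1-\rho^2}$, and minimizing the right-hand side over the full square $[-\sqrt{1-\rho^2},\sqrt{1-\rho^2}]^2$ --- \emph{deliberately ignoring} the constraint that $\alpha,\alpha'$ are linked through $\beta=\pi/3$ --- yields the minimum $1-\tfrac{1}{2\rho^2}=\tfrac12(6\sqrt5-13)$ at the corner $\cos\alpha=\cos\alpha'=\sqrt{1-\rho^2}$. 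An upper bound $\arccos\big(\tfrac12(6\sqrt5-13)\big)$ on every angle then gives the lower bound $\theta_0=\pi-2\arccos\big(\tfrac12(6\sqrt5-13)\big)$. The paper remarks explicitly that this unconstrained minimum is \emph{not} attained by any actual equilateral subtriangle, so the constant is not sharp; hence a search for the true extremal triangle would produce a strictly larger $\theta_0$, not the one in the statement. (Your candidate ``two vertices already on $S^2$, one pulled outward by $1/\rho$'' is in any case not realized by a small subtriangle for $N\geq 2$, since only the twelve icosahedron vertices lie on $S^2$.) Your final algebraic simplification of $\pi/(2\sin(\theta_0/2))$ is correct.
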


\subsubsection{Proofs of auxiliary results}\label{sss:pfs_aux_results}

Here we give the proofs of Lemmata \ref{lemma:complexity_computing_subdivision}, \ref{lemma:lower_bound_radius_spher_cap_region}, \ref{lemma:complexity_finding_region_contain_given_point}, \ref{lemma:estimates_icosahedral_triangulation}, and Proposition \ref{prop:pties_PL_approx_sphere}.

\begin{proof}[Proof of Lemma \ref{lemma:lower_bound_radius_spher_cap_region}] Let us summarize the necessary information from \cite{Zhou_Arrangements_of_points_on_the_sphere}. The result we need from there is Theorem 2.8, whose proof starts on page 25. For concreteness, we fix the parameters $N_0,k_0,r$ on that page to their sample values $N_0 = 237, k_0 = 6, r = 2$ as stated on page 27 after equation (2.4). What is written there is that the estimates based on the values of the parameters hold when the number of regions of the subdivision is at least $N_0$. Moreover, the number of regions should be at least $237$ for the estimates to work. This is the ultimate reason for our assumptions on $N,k$.

We denote the spherical metric on $S^2$ by $D$ throughout, that is $\cos D(x,y) = \langle x,y\rangle$ for $x,y \in S^3$.

Points on $S^2$ will be denoted here using their spherical coordinates $(\theta,\phi)$. The regions of the subdivision are given by rectangles in spherical coordinated $(\theta,\phi) \in [0,\pi] \times [0,2\pi]$. The construction described in the proof of Theorem 2.8 \emph{ibid.} results in the subdivision of $S^2$ into bands bounded by parallels with latitudes
$$\theta_{-1} = 0\,, \theta_0 = \arccos\left(1-\frac{12}{k}\right)\,,\dots\,,\theta_n = \pi - \theta_0\,,\theta_{n+1} = \pi\,,$$
where $n$ is the largest odd integer $\leq \sqrt{k/2}$. The band bounded between $\theta_{i-1}$ and $\theta_i$ contains $m_i$ regions, each one given in spherical coordinates by the rectangle
$$D_{ij} = [\theta_{i-1},\theta_i]\times\left[\frac{2\pi(j-1)}{m_i},\frac{2\pi j}{m_i}\right]\,,\quad i = 0,\dots,n+1\,,j=1,\dots,m_i\,.$$
We have $m_0 = m_{n+1} = 6$.

Fix an integer $1\leq i \leq n$. Consider the center of $D_{ij}$ in spherical coordinates:
$$\theta = \frac{\theta_{i-1} + \theta_i}{2}\,,\quad \phi = \frac{2\pi(j-1)}{m_i}\,.$$
It is not hard to see that the minimal spherical distance from $(\theta,\phi)$ to a parallel passing through the top or the bottom edge of $D_{ij}$ is $\frac{\theta_i - \theta_{i-1}}2$. Elementary spherical geometry also shows that the minimal spherical distance from $(\theta,\phi)$ to a meridian passing through the right or the left edge of $D_{ij}$ is
$$\arccos\sqrt{\cos^2\theta+\sin^2\theta\cos^2\frac{\pi}{m_i}}\,.$$
It follows that the minimal spherical distance from $(\theta,\phi)$ to a point in the boundary $\partial D_{ij}$ is
$$D((\theta,\phi),\partial D_{ij}) \geq \min\left(\frac{\theta_i-\theta_{i-1}}{2}\,,\arccos\sqrt{\cos^2\theta+\sin^2\theta\cos^2\frac{\pi}{m_i}}\right)\,.$$
Using the inequalities $\arccos x \geq \sqrt{1-x^2}$ and $\sin t \geq \frac 2 \pi t$ for $t \in [0,\pi/2]$, this implies
$$D((\theta,\phi),\partial D_{ij}) \geq \min\left(\frac{\theta_i - \theta_{i-1}}{2}\,,\frac{2}{m_i}\,\sin\frac{\theta_i + \theta_{i-1}}{2}\right)\,.$$
Let us estimate the second term in the minimum. For this we use the known fact that the area between parallels with latitudes $\theta_{i-1},\theta_i$ is $2\pi(\cos\theta_{i-1} - \cos\theta_i)$. On the other hand, since this band contains $m_i$ regions, it has area $\frac{4\pi m_i}{k}$, meaning
$$m_i = \frac{k}{2}(\cos\theta_{i-1} - \cos\theta_i)\,.$$
Thus we have
\begin{multline*}
\frac{2}{m_i}\,\sin\frac{\theta_i + \theta_{i-1}}{2} = \frac{4\sin \frac{\theta_i + \theta_{i-1}}{2}}{k(\cos\theta_{i-1} - \cos\theta_i)} = \frac{4\sin \frac{\theta_i + \theta_{i-1}}{2}}{k \sin \frac{\theta_i + \theta_{i-1}}{2} \sin \frac{\theta_i - \theta_{i-1}}{2}} = \\ = \frac{4}{k \sin \frac{\theta_i - \theta_{i-1}}{2}} \geq \frac{8}{k(\theta_i - \theta_{i-1})}\,.
\end{multline*}
Equation (2.20) in \cite{Zhou_Arrangements_of_points_on_the_sphere} reads
$$\frac{C_3}{\sqrt k} \leq \theta_i - \theta_{i-1} \leq \frac{C_4}{\sqrt k}\,,$$
where $C_3 = \beta - \delta_1-\delta_2$ and $C_4 = \pi\sqrt r + \delta_1 + \delta_2$. The constants $\beta,\delta_1,\delta_2$ are defined in equations (2.5), (2.12), (2.13), respectively, and $r = 2$. Combined with our estimates so far, this means that
$$D((\theta,\phi),\partial D_{ij}) \geq \min\left(\frac{C_3}{2}\,,\frac{8}{C_4}\right)\cdot\frac 1 {\sqrt k}\,.$$

This holds for all the regions in the bands between $\theta_{i-1}$ and $\theta_i$ for $1\leq i \leq n$. What remains are the polar bands, given by the rectangles
$$[0,\theta_0] \times \left[\frac{\pi(j-1)}{3},\frac{\pi j}{3}\right]\quad\text{ and }\quad[\pi-\theta_0,\pi] \times \left[\frac{\pi(j-1)}{3},\frac{\pi j}{3}\right]\,,\quad j=1,\dots, 6\,.$$
Letting $(\theta,\phi)$ be the center of one of these regions in spherical coordinates, the above discussion shows that its spherical distance to the boundary of the region is at least
$$\min\left(\frac{\theta_0}{2}\,,\frac 1 2\sin\frac{\theta_0}{2}\right) = \frac 1 2 \sin \frac{\theta_0}{2} = \frac 1 2 \sqrt{\frac 1 2 (1-\cos \theta_0)} = \sqrt{\frac 3 2}\cdot \frac 1 {\sqrt k}\,.$$
Let
$$C_5 = \min\left(\frac{C_3}{2}\,,\frac{8}{C_4}\,,\sqrt{\frac 3 2}\right)\,.$$
It follows that every region in the subdivision contains a spherical cap with spherical radius
$$\frac{C_5}{\sqrt k}\,,$$
and therefore a spherical cap with \emph{Euclidean} radius at least
$$\frac{2C_5}{\pi}\cdot\frac{1}{\sqrt k}\,.$$
We have
$$\frac{2C_5}\pi \approx 0.77970\dots$$
\end{proof}

\begin{proof}[Proof of Lemma \ref{lemma:complexity_finding_region_contain_given_point}]
The subdivision algorithm of \cite{Zhou_Arrangements_of_points_on_the_sphere} produces approximate latitudes $\theta_i'$, \emph{which form an arithmetic progression} and exact latitudes $\theta_i$ for the parallels bounding the bands which are then subdivided into the regions. There are estimates \cite[Equation (2.20)]{Zhou_Arrangements_of_points_on_the_sphere}:
$$|\theta_i - \theta_i'| \leq \frac{C_6}{\sqrt k}\,,\qquad \theta_i - \theta_{i-1} \geq \frac{C_3}{\sqrt k}\,.$$
Given the latitude $\theta$ of the given point in $S^2$, we can find the interval $[\theta_{i-1}',\theta_i']$ to which it belongs. Since the angles $\theta_i'$ are an arithmetic progression, that is $\theta_i' = \theta_0' + i\Delta\theta$, finding the appropriate $i$ amounts to computing the integer part of a number, which has complexity $O(1)$. The actual interval we are looking for is $[\theta_{j-1},\theta_j]$. The above estimates tell us that the number $j$ of the corresponding interval differs from the value $i$ found in the previous calculation by at most a constant. We can then enumerate all the candidate intervals to find which one contains $\theta$. This also has complexity $O(1)$, since the number of intervals we have to enumerate is bounded above by a constant independent of $\theta$. Finally, since the azimuths of the regions form another arithmetic progression, namely $\frac{2\pi j}{m_i}$, finding the corresponding interval of azimuthal angles which contains the azimuth $\phi$ of the given point amounts to computing the integer part of a number is therefore it has complexity $O(1)$.
\end{proof}

\begin{proof}[Proof of Lemma \ref{lemma:complexity_computing_subdivision}]
As mentioned in the proof of Lemma \ref{lemma:lower_bound_radius_spher_cap_region}, the regions of the subdivision are rectangles with respect to the spherical coordinates. The latitudes of the parallels bounding the bands are obtained in two stages. The total number of bands is approximately $\sqrt{k/2}$. In the first stage, one finds approximate angles, which are equally spaced, for which the complexity is $O(\sqrt k)$. Then one corrects the approximate angles to exact ones, using Lemma 2.11 of \cite{Zhou_Arrangements_of_points_on_the_sphere}. The lemma uses an inductive procedure which runs on a list of length $O(\sqrt k)$, and at every step corrects the two outermost entries in the list on each side. The total number of operations therefore is linear in the size of the list and thus has complexity $O(\sqrt k)$. Finally each band is subdivided into the required number of regions. Computing the azimuthal angles of the regions therefore has complexity proportional to the total number of the regions, that is $O(k)$. The total complexity therefore is $O(k)$.
\end{proof}

\begin{proof}[Proof of Proposition \ref{prop:pties_PL_approx_sphere}]Keep the notations of Section \ref{sss:estimate}. For a linear simplex $\sigma$ let $\wt\sigma = r(\sigma)$ be the corresponding curvilinear simplex on $S^2$.

For (i), let $z \in S^2$ and assume that $\sigma = [z_0,z_1,z_2]$ is a linear simplex with $z \in \wt\sigma$. Without loss of generality we assume that $f(z_0) \leq f(z_1) \leq f(z_2)$. Since $F|_{\wt \sigma} = \wt F_\sigma \circ r_\sigma^{-1}$ and $\wt F_\sigma \fc \sigma \to \R$ is an affine function, it follows that $F(z) = \wt F_\sigma(r_\sigma^{-1}(z)) \in [f(z_0),f(z_2)]$, thus
$$F(z) - f(z) \leq f(z_2) - f(z) \leq \|f\|_{\Lip}\cdot d(z,z_2) \leq \|f\|_{\Lip}\cdot\diam\wt\sigma$$
and
$$F(z) - f(z) \geq f(z_0) - f(z) \geq -\|f\|_{\Lip}\cdot d(z,z_0) \geq -\|f\|_{\Lip}\cdot\diam\wt\sigma\,,$$
and we are done.

For (ii), let $\Sigma$ be the union of the linear simplexes $\sigma$ of the triangulation, let $r_\Sigma \fc \Sigma \to S^2$ be the restriction of the radial projection map, and let $\wt F \fc \Sigma \to \R$ be defined by $\wt F|_{\sigma} = \wt F_\sigma$ for all $\sigma$. Note that $\Sigma$ is a polyhedron, $r_\Sigma$ is a homeomorphism, $\wt F$ is a well-defined PL function on $\Sigma$, and $F = \wt F \circ r_\Sigma^{-1}$.

For $z,z' \in \Sigma$, define $d_\Sigma(z,z')$ to be the length of a shortest path on $\Sigma$ between $z,z'$ of the form $r_\Sigma^{-1}\circ\gamma$ where $\gamma$ is a round geodesic on $S^2$ between $r_\Sigma(z)$ and $r_\Sigma(z')$. It is not hard to see that $d_\Sigma$ is a metric on $\Sigma$. Due to our assumptions on the triangulation it follows that $d = d_\Sigma$ on every linear simplex $\sigma$, where $d$ is the Euclidean metric on $\R^3$.

Note that $r_\Sigma^{-1} \fc (S^2,D) \to (\Sigma,d_\Sigma)$ is a contraction, that is $1$-Lipschitz, where $D$ is the round metric on $S^2$. Since $D \leq \frac \pi 2 d$, it follows that $r_\Sigma^{-1} \fc (S^2,d) \to (\Sigma,d_\Sigma)$ is $\frac{\pi}{2}$-Lipschitz, and therefore
$$\|F\|_{\Lip,d} \leq \frac{\pi}{2}\cdot\|\wt F\|_{\Lip,d_\Sigma}\,,$$
where we wrote the metrics explicitly. It therefore suffices to show that
$$\|\wt F\|_{\Lip,d_\Sigma} \leq \frac{1}{\sin(\theta_0/2)}\cdot \|f\|_{\Lip,d}\,,$$
where $\theta_0$ is the angle appearing in the formulation.

We claim that
$$\|\wt F\|_{\Lip,d_\Sigma} = \max_\sigma\|\wt F_\sigma\|_{\Lip,d_\Sigma} = \max_\sigma\|\wt F_\sigma\|_{\Lip,d}\,.$$
Clearly the left-hand side is $\geq$ the right-hand side. To see the opposite inequality, let $z,z'\in\Sigma$ be two distinct points and let $\delta \fc [0,1] \to \Sigma$ be a shortest path on $\Sigma$ with $\delta(0) = z$, $\delta(1) = z'$. Let $\sigma_1,\dots,\sigma_k$ be the distinct linear simplexes which intersect the image of $\delta$ along the interior or along the interior of an edge. Renaming them if necessary, we can assume that there are points $t_0 = 0 < t_1 < \dots < t_k = 1$ such that $\delta([t_j,t_{j+1}]) \subset \sigma_{j+1}$ for $j = 0,\dots,k-1$. Note that the same simplex cannot intersect $\delta$ in two different segments, because $r_\Sigma \circ \delta$ is a shortest round geodesic on $S^2$ between $r_\Sigma(z),r_\Sigma(z')$ and the images of the linear simplexes by $r_\Sigma$ are geodesically convex subsets of $S^2$, therefore they can only intersect a shortest geodesic along a connected segment.

Let us denote $z_j = \delta(t_j)$ for $j = 0,\dots,k$. Note that since $\delta|_{[t_j,t_{j+1}]}$ is a straight segment, it is the shortest path between $z_j,z_{j+1}$, and therefore $d_\Sigma(z_j,z_{j+1})$ is the length of $\delta|_{[t_j,t_{j+1}]}$. Since $d_\Sigma(z,z')$ is the length of $\delta$, which in turn is the sum of the lengths of the $d_\Sigma(z_j,z_{j+1})$, which as we just saw is the sum of $d_\Sigma(z_j,z_{j+1})$, we obtain
$$d_\Sigma(z,z') = \sum_{j=0}^{k-1}d_\Sigma(z_j,z_{j+1})\,.$$
It follows that
\begin{multline*}
|\wt F(z) - \wt F(z')| \leq \sum_{j=0}^{k-1}|\wt F(z_j) - \wt F(z_{j+1})|
 \leq \sum_{j=0}^{k-1} \|\wt F_{\sigma_{j+1}}\|_{\Lip,d_{\Sigma}}\cdot d_\Sigma(z_{j},z_{j+1})\\
\leq \max_\sigma\|\wt F_\sigma\|_{\Lip,d_\Sigma}\sum_{j=0}^{k-1} d_\Sigma(z_{j},z_{j+1}) = \max_\sigma\|\wt F_\sigma\|_{\Lip,d_\Sigma}\cdot d_\Sigma(z,z')\,,
\end{multline*}
as claimed.

It remains to show that $\|\wt F_\sigma\|_{\Lip,d} \leq \frac{\|f\|_{\Lip,d}}{\sin(\theta_0/2)}$. Let $\sigma = [z_0,z_1,z_2]$. For the purpose of estimating the Lipschitz constant, we can translate and rotate $\sigma$ so that it is contained in $\R^2 \times \{0\} \subset \R^3$. We can further assume that relative to the coordinates on $\R^2$, we have $z_0 = 0$, $z_1 = (a,0)$, $z_2 = (b,c)$, with $a,c > 0$, and, subtracting a constant from $f$, we can assume that $f(z_0) = 0$. Denote $q_1=f(z_1)$, $q_2 = f(z_2)$. Let $G \fc \R^2 \to \R$ be the the linear function such that $G(z_1) = q_1$, $G(z_2) = q_2$. Since $G|_\sigma = \wt F_\sigma$, the Lipschitz constants of $\wt F_\sigma$ and $G$ are the same. Since $G \fc \R^2 \to \R$ is linear, its Lipschitz constant is
$$\sqrt{G^2(1,0)+G^2(0,1)}\,.$$
We have
$$(1,0) = \frac {z_1} a\,,\quad \text{ thus }\quad G(1,0) = \frac {q_1} a = \frac{q_1}{\|z_1\|}\,. $$
Next,
$$(0,1) = -\frac{b}{ac}\cdot z_1 + \frac 1 c \cdot z_2\,,\quad \text{ thus } \quad G(0,1) = -\frac{bq_1}{ac} + \frac{q_2}{c} = -\frac{q_1}{\|z_1\|} \cot\theta + \frac{q_2}{\|z_2\|\sin\theta}\,,$$
where $\theta$ is the angle between $z_1$ and $z_2$. We have
$$G^2(1,0) + G^2(0,1) = \frac{q_1^2}{\|z_1\|^2} + \frac{q_1^2}{\|z_1\|^2}\cot^2\theta - \frac{2q_1q_2\cot\theta}{\|z_1\|\|z_2\|\sin\theta}+\frac{q_2^2}{\|z_2\|^2\sin^2\theta}\,.$$
Let $L = \|f\|_{\Lip,d}$. Then $|q_j| = |f(z_j) - f(z_0)| \leq L\|z_j - z_0\| = L \|z_j\|$ for $j = 1,2$. Moreover, since by assumption $\theta \in [\theta_0,\pi - \theta_0]$, we have $\sin\theta \geq \sin\theta_0$ and $|\cot\theta| \leq \cot\theta_0$. Plugging these estimates into the last expression, we obtain
$$\|\wt F_\sigma\|_{\Lip,d}^2 = G^2(1,0) + G^2(0,1) \leq L^2\left(1 + \cot^2\theta_0 + 2\frac{\cot\theta_0}{\sin\theta_0} + \frac{1}{\sin^2\theta_0}\right) = \frac{L^2}{\sin^2(\theta_0/2)}\,,$$
and the proof of the proposition is thereby complete.
\end{proof}

\begin{proof}[Proof of Lemma \ref{lemma:estimates_icosahedral_triangulation}] For (i), let $\wh\sigma$ be one of the $20N^2$ congruent triangles. The corresponding curvilinear simplex is $\wt\sigma = r(\wh\sigma)$ where $r \fc \R^3 - \{0\} \to S^2$ is the radial projection. Since $r$ is $C^1$, its Lipschitz constant can be estimated in terms of the operator norm of its differential, namely if we restrict $r$ to the complement of the Euclidean ball $B(\rho) \subset \R^3$ of radius $\rho > 0$, then
$$\|r\|_{\Lip(\R^3 - B(\rho),d)} \leq \sup_{x \notin B(\rho)}\|d_xr\|\,,$$
where
$$\|d_xr\| = \max_{\|X\| = 1}\|d_xr(X)\|\,.$$
An explicit calculation shows that $\|d_xr\| = \frac{1}{\|x\|}$, and therefore
$$\|r\|_{\Lip(\R^3 - B(\rho),d)} \leq \frac{1}{\rho}\,.$$
It follows that
$$\diam \wt\sigma \leq \|r\|_{\Lip(\wh\sigma,d)}\diam \wh\sigma \leq \frac{1}{c_{\wh\sigma}}\diam\wh\sigma\,,$$
where $c_{\wh\sigma}$ is the distance of the origin $0 \in \R^3$ to the closest point on $\wh\sigma$. We have $c_{\wh\sigma} \geq R_i$ where $R_i$ is the insphere radius of the icosahedron.

Next, since $\wh\sigma$ is one of the $N^2$ congruent triangles into which we subdivided a face of the icosahedron, its diameter is $\frac{1}{N}$ times the diameter of the face, which equals its edge length $a$, since it is an equilateral triangle. In total
$$\diam \wt\sigma \leq \frac{a}{R_i}\frac 1 N\,.$$
The ratio of the edge length to the insphere radius for the icosahedron is $\sqrt 3\big(3 - \sqrt 5\big)$, thus
$$\diam\wt\sigma \leq \frac{\sqrt 3\big(3-\sqrt 5\big)}{N}\,.$$

For (ii) we will present a general argument for estimating the angles of a linear simplex whose vertices are obtained by radially projecting three points lying inside the $3$-ball bounded by $S^2$. Let $z \in (0,1)$ be fixed and consider the disk $D = \R^2 \times \{z\} \cap \ol B{}^3(1)$, where $\ol B{}^3(1)$ is the closed Euclidean ball of radius $1$. Consider a triangle $\Delta \subset D$ with vertices $x_i$ and let $y_i = r(x_i)$. In our case $\Delta$ is equilateral so all of its interior angles are $\frac \pi 3$, and we would like to estimate the angles of the Euclidean triangle $\wt\Delta$ with vertices $y_1,y_2,y_3$, more specifically we wish to obtain a lower estimate on these angles. The strategy is to obtain \emph{upper bounds} on these angles, since if every angle is at most $\theta$, then every angle is at least $\pi -2\theta$.

First, note that each interior spherical angle of the spherical triangle $r(\Delta)$ is at least the angle of $\wt\Delta $ at the corresponding vertex. Therefore to obtain upper bounds on the angles of $\wt\Delta$ we can bound the spherical angles of $r(\Delta)$ from above. This can be done by considering the differential of $r$. Let $x$ be one of the vertices of $\Delta$ and let $X,X' \in T_xD = \R^2 \times \{0\}$ be two unit vectors tangent to $D$ pointing from $x$ in the directions of the other two vertices. Then
$$d_xr(X) = \frac{X}{\|x\|} - \frac{\langle x,X\rangle}{\|x\|^3}\;x$$
and similarly for $d_xr(X')$. Letting $\beta$ be angle between $X,X'$, $\alpha,\alpha'$ the angles between $x$ and $X,X'$, respectively, and $\gamma$ the angle between $d_xr(X),d_xr(X')$, which is the spherical angle of $r(\Delta)$ at $r(x)$, an elementary calculation shows that
$$\cos \gamma = \frac{\cos \beta - \cos\alpha\cos\alpha'}{\sin\alpha\sin\alpha'}\,.$$
Since we wish to obtain an \emph{upper} bound on $\gamma$, we need to establish a \emph{lower} bound on $\cos \gamma$. Using the fact that $x$ is constrained to the disk $D$, we can prove that $|\cos\alpha|,|\cos\alpha'| \leq \sqrt{1-z^2}$. Since $\beta = \frac{\pi}{3}$ by assumption, we have $\cos \beta = \frac 1 2$. Moreover writing $\cos \alpha = s$, $\cos\alpha' = s'$, we come to the minimization problem of the function
$$\frac{\tfrac 1 2 - ss'}{\sqrt{(1-s^2)(1-s'^2)}}$$
over the square $(s,s') \in [-\sqrt{1-z^2},\sqrt{1-z^2}]^2$. Since the angles $\alpha,\alpha'$ are not independent, because the angle between $X,X'$ is set to $\frac{\pi}{3}$, the actual lower bound on $\cos \gamma$ is going to be larger than the absolute minimum of this function, but the latter is much more easily computed, and in fact it is attained at the point $s=s' = \sqrt{1-z^2}$, therefore it equals $1-\frac{1}{2z^2}$, so in total we obtain
$$\cos\gamma \geq 1 - \frac{1}{2z^2}\,.$$
For the icosahedral triangulation all the triangles lie in planes which are at a distance from $0 \in \R^3$ at least the insphere radius of the icosahedron, which is $\frac{3+\sqrt 5}{\sqrt{30 + 6\sqrt 5}}$, therefore $z$ can be set to this number, which implies that the spherical angles of the projection of one of the triangles of the triangulation are all at most $\arccos\big(1 - \frac 1 {2z^2}\big)$ radians. By the above, this also is an upper bound on the angles of the corresponding Euclidean triangle, which in turn implies that all the angles are \emph{at least}
$$\theta_0 = \pi - 2 \arccos \left(1-\frac 1{2z^2}\right) = \pi-2\arccos\left(\frac 1 2 \left(6\sqrt 5 - 13\right)\right)\,.$$
\end{proof}

\subsection{Approximation results}\label{ss:approx_results}

We start by proving Theorems \ref{thm:metric_conv_qss} and \ref{thm:weak_conv_qss}. For this we will need the results of Section \ref{sss:simples_qss_mfds} as well as the following elementary fact: if $\tau$ is any topological measure on $X$, then any two closed subsets $C,C'\subset X$ with $\tau(C) = \tau(C') = 1$ must intersect. Indeed, otherwise by the additivity and monotonicity of $\tau$ we would have
$$2 = \tau(C) + \tau(C') = \tau(C \cup C') \leq \tau(X) = 1\,,$$
which is a contradiction.

\begin{proof}[Proof of Theorem \ref{thm:metric_conv_qss}]
Let us first prove the theorem for a Morse function $f \in C(X)$. Let $m = m_\mu(f)$. Let $\epsilon > W_\infty(\mu,\nu)$. Let $M$ be the closed $\epsilon$-neighborhood of $m$. The complement $X - M$ is a countable disjoint union of open connected sets $D_i$. Let $X - m = C_1 \cup \dots \cup C_k$ be the decomposition into connected components. Since $X - M \subset X - m$, every $D_i$ is contained in one of the $C_j$.

For a Borel subset $A \subset X$ let us denote
$$A^{\leq \epsilon} = \{x \in X\,|\,d(x,y) \leq \epsilon\text{ for some }y\in A\}\,.$$
Fix one of the components $D_i$, and assume $D_i \subset C_j$. We see that $D_i^{\leq \epsilon}$ is connected and disjoint from $m$. Since $D_i \subset C_j$ and $C_j$ is itself connected and disjoint from $m$, it follows that $D_i^{\leq \epsilon} \subset C_j$.

Proposition 5 of \cite{Givens_Shortt_A_class_of_Wasserstein_metrics_prob_distr} exhibits another formula for $W_\infty$:
$$W_\infty(\mu,\nu) = \inf\{\epsilon > 0\,|\, \mu(A^{\leq \epsilon}) \geq \nu(A)\text{ for all Borel }A \subset X\}\,.$$
It follows that
$$\nu(D_i)\leq\mu(D_i^{\leq \epsilon}) \leq \mu(C_j)\leq \frac 1 2\,,$$
implying that $\tau_\nu(D_i) = 0$. From the countable additivity of $\tau_\nu$ \cite{Grubb_LaBerge_Additivity_qms} it follows that $\tau_\nu(M) = 1$. Since $\tau_\nu(m_\nu(f)) = 1$, it follows that $M \cap m_\nu(f) \neq \varnothing$. Since $\max_{x\in M}d(x,m) \leq \epsilon$ and since $f$ is Lipschitz, we obtain
$$|\zeta_\mu(f) - \zeta_\nu(f)| = |f(m) - f(m_\nu(f))| \leq \|f\|_{\Lip}\cdot \epsilon\,.$$
The assertion now follows from the definition of $W_\infty(\mu,\nu)$, \eqref{eq:W_infty_metric}.

If $f\fc X \to \R$ is an arbitrary Lipschitz function, for every $\delta>0$ there is a Morse function $g \fc X \to \R$ with $\|g\|_{\Lip} \leq \|f\|_{\Lip}+\delta$ and $\|f-g\|_{C^0} \leq \delta$. It then follows that
$$|\zeta_\mu(f) - \zeta_\nu(f)| \leq |\zeta_\mu(f) - \zeta_\mu(g)| + |\zeta_\mu(g) - \zeta_\nu(g)| + |\zeta_\nu(g) - \zeta_\nu(f)|\,.$$
Since quasi-states are $1$-Lipschitz with respect to the $C^0$-norm, the first and third terms in the right-hand side are at most $\delta$, and for the middle term we have just established an upper bound of $\|g\|_{\Lip}\cdot W_\infty(\mu,\nu)$, thus
$$|\zeta_\mu(f) - \zeta_\nu(f)| \leq (2+W_\infty(\mu,\nu))\delta + \|f\|_{\Lip}\cdot W_\infty(\mu,\nu)\,,$$
and the result follows upon taking $\delta \to 0$.
\end{proof}

\begin{proof}[Proof of Theorem \ref{thm:weak_conv_qss}]The space $\cP_0(X)$ being metrizable, it suffices to show that if $\mu_n \in \cP_0(X)$ is a sequence converging to $\mu \in \cP_0(X)$, then $\zeta_{\mu_n}(f) \to \zeta_\mu(f)$ for every $f \in C(X)$.

Let us first prove the theorem when $f$ is a Morse function. Let $m = m_\mu(f)$. Let $\{C_i\}_{i=1}^k$ be the connected components of $X - m$. Recall that $\mu(C_i) \leq \frac 1 2$ for all $i$. Let $\Gamma$ be the Reeb graph of $f$ and let $\pi \fc X \to \Gamma$ be the quotient map. Let $\wh f \fc \Gamma \to \R$ be the continuous function such that $\wh f \circ \pi = f$. The function $\wh f$ induces a metric $d_f$ on $\Gamma$, where $d_f(a,b)$ is the infimum of total variations of $\wh f$ over all paths connecting $a,b$.

Fix $\epsilon > 0$. Let $M$ be the preimage by $\pi$ of the open $\epsilon$-neighborhood of $m$ in $\Gamma$ with respect to $d_f$, and let $B_i = C_i \cap (X - M)$. Then $\mu(B_i) < \frac 1 2$. Indeed, otherwise we would have $\mu\big(m \cup \bigcup_{j\neq i}C_j\big) = \frac 1 2$ and $\mu(B_i) = \frac 1 2$, and since $B_i$ and $m \cup \bigcup_{j\neq i}C_j$ are closed and disjoint, this would contradict the assumption $\frac 1 2 \notin \Spec \mu$.

Note that $B_i$ is connected. Since it is also closed, it follows from the portmanteau theorem \cite{Bhattacharya_Waymire_Weak_conv_prob_meas_metric_spaces} that $\limsup_{n\to\infty}\mu_n(B_i) \leq \mu(B_i) < \frac 1 2$, and therefore that there is $n_0 \in \N$ such that for all $n \geq n_0$ and $i = 1,\dots,k$ we have $\mu_n(B_i) < \frac 1 2$. Since the complement of $B_i$ is connected as well, we see that $B_i$ is solid, and therefore from \eqref{eq:defin_tm_corresp_to_meas} it follows that $\tau_{\mu_n}(B_i) = 0$, and therefore from additivity that $\tau_{\mu_n}(M) = 1$. From monotonicity it follows that $\tau_{\mu_n}(\ol M) = 1$. Since $\tau_{\mu_n}(m_{\mu_n}(f)) = 1$, from the discussion at the beginning of the subsection it follows that $m_{\mu_n(f)} \cap \ol M \neq \varnothing$. Since $\max_{x\in \ol M}|f(x) - f(m)| \leq \epsilon$, we see that
$$|\zeta_{\mu_n}(f) - \zeta_\mu(f)| = |f(m_{\mu_n}(f)) - f(m)| \leq \epsilon\,,$$
which proves that $\zeta_{\mu_n}(f) \to \zeta_\mu(f)$.

Let now $f \in C(X)$ be arbitrary. Then for $\epsilon > 0$ there is a Morse function $g$ with $\|f-g\|_{C^0} < \epsilon$. We have
$$|\zeta_{\mu_n}(f) - \zeta_\mu(f)| \leq |\zeta_{\mu_n}(f) - \zeta_{\mu_n}(g)| + |\zeta_{\mu_n}(g) - \zeta_\mu(g)| + |\zeta_\mu(g) - \zeta_\mu(f)|\,,$$
and owing to the Lipschitz property of quasi-states we see that the first and the last terms in the right-hand side are $< \epsilon$. Taking $\limsup$ with respect to $n$, we obtain
$$\limsup_{n\to\infty}|\zeta_{\mu_n}(f) - \zeta_\mu(f)| \leq 2\epsilon + \lim_{n\to\infty}|\zeta_{\mu_n}(g) - \zeta_\mu(g)| = 2\epsilon\,,$$
and since $\epsilon$ is arbitrary, we arrive at the desired conclusion $\lim_{n\to\infty}|\zeta_{\mu_n}(f) - \zeta_\mu(f)| = 0$.
\end{proof}

We also prove Proposition \ref{prop:W_1_induces_weak_top_qss}, which states that the $W_1$ distance induces the weak topology on $\cQ(X)$.
\begin{proof}[Proof of Proposition \ref{prop:W_1_induces_weak_top_qss}]In one direction, assume that a net of quasi-states $\zeta_i \in \cQ(X)$ converges to $\zeta \in \cQ(X)$ with respect to $W_1$, that is for every Lipschitz $f$ we have $|\zeta_i(f) - \zeta(f)| \leq W_1(\zeta_i,\zeta)\cdot\|f\|_{\Lip}$, therefore $\zeta_i(f) \to \zeta(f)$. By the Stone--Weierstra\ss\ theorem, $\Lip(X,d)$ is dense in $C(X)$, therefore for every $f \in C(X)$ and $\delta > 0$ we can find $g \in \Lip(X,d)$ with $\|f - g\|_{C^0} < \delta$, therefore
$$|\zeta_i(f) - \zeta(f)| \leq |\zeta_i(f) - \zeta_i(g)| + |\zeta_i(g) - \zeta(g)| + |\zeta(g) - \zeta(f)|\,.$$
Quasi-states being $1$-Lipschitz, the first and the third terms on the right are $\leq \delta$, while the middle term goes to zero as $i \to \infty$, because $g$ is Lipschitz, therefore
$$\limsup_i|\zeta_i(f) - \zeta(f)| \leq 2\delta\,,$$
and taking $\delta \to 0$, we see that $\zeta_i(f)\to\zeta(f)$. Thus $\zeta_i \to \zeta$ weakly.

For the other direction, note that if we fix $x_0 \in X$, then the subspace
$$\cL = \{f \in \Lip(X,d)\,|\,f(x_0) = 0\,,\|f\|_{\Lip} \leq 1\} \subset C(X)$$
is compact due to the Arzel\`a--Ascoli theorem. It follows that in \eqref{eq:W_1_dual_defin_via_Lip_fcns} the supremum is attained, that is for every $\zeta,\zeta' \in \cQ(X)$ there is $f \in \cL$ such that
$$W_1(\zeta,\zeta') = \zeta(f) - \zeta'(f)\,.$$
Assume now that $\zeta_i \to \zeta$ weakly. It is enough to prove that $\limsup_iW_1(\zeta_i,\zeta) = 0$. Let us pass to a subnet $\zeta_{i_k}$ such that
$$\limsup_iW_1(\zeta_i,\zeta) = \lim_kW_1(\zeta_{i_k},\zeta)\,,$$
and let us prove that the latter limit is zero. To simplify notation, we will write $\zeta_i$ for this subnet as well.

For every $i$ fix $f_i \in \cL$ with
$$W_1(\zeta_i,\zeta) = \zeta_i(f_i) - \zeta(f_i)\,.$$
Since $\cL$ is compact, there is a subnet of $(f_i)_i$ converging to $f \in \cL$. By abuse of notation we denote this subnet again by $(f_i)_i$. Then we have
\begin{align*}
W_1(\zeta_i,\zeta) &= \zeta_i(f_i) - \zeta(f_i)\\
&= [\zeta_i(f_i) - \zeta_i(f)] + [\zeta(f) - \zeta(f_i)] + [\zeta_i(f) - \zeta(f)]\\
&\leq 2\|f - f_i\|_{C_0} + |\zeta_i(f) - \zeta(f)|\,,
\end{align*}
where we again used the Lipschitz property of quasi-states. Now $\|f_i-f\|_{C^0} \to 0$ by the definition of $f$, and $|\zeta_i(f) - \zeta(f)| \to 0$ by the weak convergence of $\zeta_i$ to $\zeta$. Thus $W_1(\zeta_i,\zeta) \to 0$ and the proposition is proved.
\end{proof}

\subsection{Non-approximation results}\label{ss:pfs_non_approx_results}

For the proof of Theorem \ref{thm:general_non_approximation} we need the following elementary lemma.
\begin{lemma}\label{lemma:meas_thry_empty_triple_intersections}
Let $(\Omega,\cB,\mu)$ be a probability space. Assume there are $U_1,\dots,U_4 \in \cB$ such that the triple intersections are all empty. Then there is $j =1,\dots,4$ such that $\mu(U_j) \leq \frac 1 2$.
\end{lemma}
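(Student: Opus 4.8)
The plan is to argue by contradiction via a pointwise bound on the sum of indicator functions. Suppose, contrary to the claim, that $\mu(U_j) > \frac 1 2$ for every $j = 1,\dots,4$. The key observation is that the hypothesis on triple intersections translates into the pointwise inequality
$$\sum_{j=1}^4\chi_{U_j} \leq 2 \quad\text{on }\Omega\,.$$
Indeed, fix $\omega\in\Omega$ and suppose $\omega$ belonged to three of the sets, say $U_{j_1},U_{j_2},U_{j_3}$ with $j_1,j_2,j_3$ distinct; then $\omega\in U_{j_1}\cap U_{j_2}\cap U_{j_3}$, contradicting the assumption that all triple intersections are empty. Hence each $\omega$ lies in at most two of the $U_j$, which is exactly the displayed inequality.

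Now integrate this inequality against $\mu$. Since each $\chi_{U_j}$ is measurable and $\mu$ is a probability measure, linearity and monotonicity of the integral give
$$\sum_{j=1}^4 \mu(U_j) = \int_\Omega \sum_{j=1}^4 \chi_{U_j}\,d\mu \leq \int_\Omega 2\,d\mu = 2\,.$$
On the other hand, the contradiction hypothesis $\mu(U_j) > \frac 1 2$ for all $j$ forces $\sum_{j=1}^4\mu(U_j) > 2$, which is incompatible with the previous line. This contradiction shows that $\mu(U_j) \leq \frac 1 2$ for at least one index $j$, as desired.

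There is no real obstacle here: the only point requiring a moment's thought is the reformulation of ``empty triple intersections'' as the pointwise bound $\sum_j\chi_{U_j}\leq 2$, and the rest is a one-line integration. (One could equally phrase the argument via inclusion--exclusion or a direct counting argument, but the indicator-function version is the cleanest.)
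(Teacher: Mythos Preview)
Your proof is correct and follows essentially the same contradiction strategy as the paper: both arguments exploit that no point lies in three of the $U_j$. The only cosmetic difference is packaging---you integrate the pointwise bound $\sum_j\chi_{U_j}\leq 2$ directly, whereas the paper runs inclusion--exclusion to obtain $\sum_{i<j}\mu(U_i\cap U_j)>1$ and then notes that the pairwise intersections are disjoint; your version is the cleaner of the two.
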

\begin{proof}
Assume the contrary: $\mu(U_j) > \frac 1 2$ for all $j$. Since the triple intersections are empty, we have
$$1 \geq \mu(U_1 \cup U_2 \cup U_3 \cup U_4) = \sum_{i=1}^4\mu(U_i) - \sum_{i < j}\mu(U_i\cap U_j) > 2 - \sum_{i < j}\mu(U_i\cap U_j)\,,$$
which implies
$$\sum_{i < j}\mu(U_i\cap U_j) > 1\,,$$
which is a contradiction since the sets $U_i \cap U_j$ are pairwise disjoint for $i < j$.
\end{proof}

\begin{proof}[Proof of Theorem \ref{thm:general_non_approximation}]For a subset $A \subset X$ we let the open $\epsilon$-neighborhood of $A$ be denoted by $A^{<\epsilon}$.

There is $\epsilon > 0$ such every $Y_j^{<\epsilon}$ is a tubular neighborhood of $Y_j$, and such that the triple intersections of $Y_1^{<\epsilon},\dots,Y_4^{<\epsilon}$ are still empty. Fix $\mu \in \cP_0(X)$. It follows from Lemma \ref{lemma:meas_thry_empty_triple_intersections} that there is $j = 1,\dots,4$ such that $\mu(Y_j^{<\epsilon}) \leq \frac 1 2$. Since $Y_j$ is assumed to have codimension at least $2$, the tubular neighborhood $Y_j^{<\epsilon}$ has connected complement and is therefore solid. From \eqref{eq:defin_tm_corresp_to_meas} it follows that $\tau_\mu(Y_j^{<\epsilon}) = 0$. Thus $\tau_\mu(Y_j^{<s}) = 0$ and $\tau_\mu(X - Y_j^{<s}) = 1$ for all $s \in [0,\epsilon]$.

Let us define $\rho \fc X \to \R$ to be the distance from $Y_j$. Then $Y_j^{<s} = \{\rho < s\}$ and thus $X - Y_j^{<s} = \{\rho\geq s\}$. We then have
$$\zeta_\mu(\rho) = \int_0^{\max \rho} \tau_\mu(\{\rho \geq s\})\,ds \geq \epsilon\,,$$
since the integrand is $1$ for $s \in [0,\epsilon]$. On the other hand, since $\tau(Y_j) = 1$ and $\rho|_{Y_j} \equiv 0$, we have $\zeta(\rho) = 0$. Therefore, since $\rho$ is $1$-Lipschitz, we have
$$W_1(\zeta,\zeta_\mu) \geq \zeta_\mu(\rho) - \zeta(\rho) \geq \epsilon$$
as claimed.
\end{proof}

\begin{proof}[Proof of Theorem \ref{thm:non_approx_sympl_mfds}] Let $U \subset X$ be a Weinstein neighborhood of $L$ (see \cite[Theorem 3.33]{McDuff_Salamon_Intro_sympl_topology}), that is there is an open subset $V \subset T^*L$ containing $L$ and a symplectomorphism $\psi \fc U \to V$ where $V \subset T^*L$, and $\psi|_L = \id_L$. Let $f_0 \fc L \to \R$ be the zero function. For generically chosen Morse functions $f_1,f_2,f_3$, the functions $f_i-f_j$, $0 \leq i < j\leq 3$ all have pairwise disjoint critical point sets. In particular, the Lagrangian submanifolds $\Gamma_{df_i}$, $i = 0,\dots,3$ have empty triple intersections, where for a $1$-form $\alpha$ on $L$ we let $\Gamma_\alpha$ be the image of $\alpha$ viewed as a section $\alpha \fc L \to T^*L$. Indeed, for two $1$-forms $\alpha,\beta$ on $L$ we have $\Gamma_\alpha \cap \Gamma_\beta = \{(q,\xi)\in T^*L\,|\,\alpha_q = \beta_q = \xi\}$, which is in bijection with the zero set of $\alpha - \beta$.

Scaling if necessary, we can assume that $\Gamma_{df_i} \subset V$ for all $i$, and so we can define Lagrangian submanifolds $L_i:=\psi^{-1}(\Gamma_{df_i}) \subset X$. It is a standard fact that $L_i$ are Hamiltonian isotopic to $L$. Therefore we have found a quadruple of Lagrangian submanifolds in $X$, all Hamiltonian isotopic to $L$, with empty triple intersections. Since $\zeta$ and $\tau$ are invariant under the action of the Hamiltonian group of $X$, it follows that $\tau(L_i) = \tau(L) = 1$ for $i=1,2,3$. Since Lagrangians have codimension $\frac 1 2 \dim M \geq 2$, we see that all the conditions of Theorem \ref{thm:general_non_approximation} are satisfied, and the conclusion follows.
\end{proof}

\begin{proof}[Proof of Corollary \ref{coroll:non_approx_CPn_S2_x_S2}]If $X = \C P^n$, it is known that the topological measure $\tau$ associated to $\zeta$ satisfies $\tau(T) = 1$ where $T \subset \C P^n$ is the Clifford torus \cite{Entov_Polterovich_rigid_subsets_sympl_mfds}. If $X = S^2 \times S^2$, then for the product $L$ of equators we have $\tau(L) = 1$, \emph{ibid}. The result now follows from Theorem \ref{thm:non_approx_sympl_mfds}, using the fact that $\zeta$ is invariant under the Hamiltonian group.
\end{proof}

\bibliography{biblio}
\bibliographystyle{plain}

\end{document}